\documentclass[12pt]{amsart}
\usepackage{amsmath}
\usepackage{amsmath,mathtools}
\usepackage{imakeidx}
\makeindex

\usepackage{hyperref}
\usepackage{graphicx, wrapfig}
\usepackage{txfonts}
\usepackage{ulem, cancel}
\usepackage[usenames]{color}

\usepackage{comment}

\usepackage{enumitem}
\usepackage{xcolor}

\counterwithin{figure}{section}
\setcounter{figure}{0}

\title[Gromov's Compactness for Intrinsic Timed-Hausdorff]{Gromov's Compactness Theorem for the Intrinsic Timed-Hausdorff Distance}

\author[Che]{Mauricio Che} 
\author[Perales]{Raquel Perales}
\author[Sormani]{Christina Sormani}

\thanks{R. Perales was funded by 
the Austrian Science Fund
(FWF) [Grant DOI: 10.55776/EFP6]. M.~Che was funded by the Austrian Science Fund (FWF) [Grant DOI: 10.55776/STA32]. 
  The research was also funded in part by Sormani's PSC-CUNY and NSF DMS-1612409 grants. 
}

\theoremstyle{plain} 

\newtheorem{thm}{Theorem}[section]
\newcommand{\bt}{\begin{thm}}
\newcommand{\et}{\end{thm}}

\newtheorem{cor}[thm]{Corollary}

\newcommand{\bc}{\begin{cor}}
\newcommand{\ec}{\end{cor}}

\newtheorem{lem}[thm]{Lemma}   

\newcommand{\bl}{\begin{lem}}
\newcommand{\el}{\end{lem}}

\newtheorem{prop}[thm]{Proposition}
\newcommand{\bp}{\begin{prop}}
\newcommand{\ep}{\end{prop}}

\newtheorem{defn}[thm]{Definition}
\newtheorem{conj}[thm]{Conjecture}

\newcommand{\ben}{\begin{itemize}}
\newcommand{\een}{\end{itemize}}

\newcommand{\bd}{\begin{defn}}       
\newcommand{\ed}{\end{defn}}

\newtheorem{rmrk}[thm]{Remark}   

\newcommand{\br}{\begin{rmrk}}
\newcommand{\er}{\end{rmrk}}

\newcommand{\GHto}{\stackrel { \textrm{GH}}{\longrightarrow} }

\newcommand{\tHto}{\stackrel { \tau-\textrm{H}}{\longrightarrow} }

\newcommand{\be}{\begin{equation}}

\newcommand{\ee}{\end{equation}}

\newcommand{\diam}{\operatorname{diam}}

\newcommand{\disjointunion}{\sqcup}

\def\iff{\Longleftrightarrow}
\def\implies{\Longrightarrow}

\setcounter{tocdepth}{1}

\begin{document}

\begin{abstract}  
The intrinsic timed-Hausdorff distance between timed-metric-spaces, first introduced by Sakovich--Sormani, yields a weak notion of convergence for space-times.
In this paper we prove a compactness theorem for the intrinsic timed-Hausdorff convergence of timed-metric-spaces using timed-Fr\'echet maps. Our proof introduces the notion of ``addresses'' and provides a new way of stating Gromov's original compactness theorem for Gromov--Hausdorff  convergence of metric spaces. We also obtain a new Arzel\`a--Ascoli theorem for real valued uniformly bounded Lipschitz functions on Gromov--Hausdorff converging compact metric spaces. 
Moreover, we establish
the triangle inequality for the intrinsic timed-Hausdorff distance. 
\end{abstract}

\maketitle
\tableofcontents

\section {\bf Introduction}

One of the most profound theorems in metric geometry is Gromov's Compactness Theorem, which states that  a sequence of equibounded and equicompact metric spaces has a subsequence converging in the Gromov--Hausdorff sense to a compact metric space \cite{Gromov-poly,Gromov-1981}. Moreover, when a sequence of metric spaces converges in the Gromov--Hausdorff sense 
one can prove Arzel\`a--Ascoli theorems for sequences of uniformly bounded Lipchitz
functions on the sequence of spaces (c.f. Greene--Petersen \cite{Greene-Petersen}).

A sequence of compact Riemannian manifolds can be canonically converted into compact metric spaces using the Riemannian distance. With Gromov--Hausdorff convergence, 
one can study sequences of Riemannian manifolds with different topologies and even different dimensions. With uniform upper diameter bounds and lower Ricci curvature bounds, Gromov proved that Riemannian manifolds are equicompact and equibounded and thus any sequence subconverges to a limit space \cite{Gromov-1981,Gromov-metric}.  This was applied with great success by Cheeger, Colding, Fukaya, Naber, Sormani, Wei and others in many papers including \cite{ChCo-almost-rigidity,ChCo-PartI,CheegerGromovI,Cheeger-Naber-Invent-2013,Fukaya-87,SorWei3}.

One would also like to study sequences of smooth Lorentzian manifolds which are not diffeomorphic.  In \cite{SakSor-Notions}, Sakovich and Sormani introduced the notion of the intrinsic timed-Hausdorff convergence as one possible way to achieve this.  They described a method of converting smooth causally null compactifiable space-times into compact timed-metric-spaces in a canonical way using the cosmological time function of Anderson--Galloway--Howard \cite{AGH} and the null distance of Sormani--Vega \cite{SV-Null}. 
In general, timed-metric-spaces, $(X_j,d_j,\tau_j)$, are metric spaces, $(X_j,d_j)$, endowed with a $1$-Lipschitz function, $\tau_j\colon X_j\to \mathbb R$, and a causal structure. They then defined the intrinsic timed-Hausdorff convergence of these timed-metric-spaces.

In this paper we prove a compactness theorem for the notion of intrinsic timed-Hausdorff convergence. For the full, somewhat stronger statement, see Theorem~\ref{thm:timed-Gromov-compactness-A}. Note that no knowledge of Lorentzian geometry is required throughout this manuscript, but 
further applications of our compactness theorem to Lorentzian manifolds will appear in future work.

\begin{thm}[Timed Gromov Compactness Theorem--short version]\label{thm:timed-Gromov-compactness}
If $(X_j,d_j,\tau_j)$ is a sequence of compact timed-metric-spaces that are equibounded, 
\be
\exists D>0 \,s.t.\, \diam_{d_j}(X_j)\le D,
\ee
equicompact,
\be\label{eq:equicompact-A}
\forall R\in (0,D]\,
\exists N(R) \in \mathbb N \,s.t.\,
\exists \{x^j_{R,i}:i=1,\ldots,N(R)\}\,\, s.t.\,\,
X_j\subset \bigcup_{i=1}^{N(R)} B(x^j_{R,i},R)
\ee
and have a uniform bound on their $1$-Lipschitz functions 
\be
\tau_j(X_j)\subset [0,\tau_{max}],
\ee
then a subsequence converges in the intrinsic timed-Hausdorff sense to a compact timed-metric-space $(X_\infty, d_\infty, \tau_\infty)$. 
In fact, there exist distance and time preserving 
timed-Fr\'echet maps,
\be
\varphi_j\colon X_j \to [0,\tau_{max}]\times Z \subset \ell^\infty,
\ee
where $Z$ is compact, such that 
\be \label{eq:H-A}
d_H^{[0,\tau_{max}]\times Z}(\varphi_j(X_j),\varphi_\infty(X_\infty))\to 0.
\ee
\end{thm}

We remark that there are special classes of space-times where the cosmological time functions are easy to control.  These include the big-bang space-times and future developed space-times.  Sormani--Vega \cite{SV-BigBang} and Sakovich--Sormani \cite{SakSor-Notions} defined convergence for such space-times based on the notions of pointed GH convergence \cite{Gromov-metric} and GH convergence of metric pairs by Che, Galaz-Garc\'ia, Guijarro,
and Membrillo Solis \cite{CGGGMS}.
Such notions already have compactness theorems proven by Gromov \cite{Gromov-metric} and by
Che and Ahumada G\'omez \cite{Che-Gomez-pairs} respectively. The intrinsic timed-Hausdorff distance applies to a far wider class of space-times including sub-space-times that exhaust asymptotically flat space-times \cite{SakSor-Notions}.

It should be noted that some authors have used the Sormani--Vega null distance of \cite{SV-Null} to convert space-times into metric spaces without keeping track of the time functions. See the work of Allen, Burtscher, Garc\'ia-Heveling,
Kunzinger, and Steinbauer
in \cite{Allen-Null, Allen-Burtscher-22, Burtscher-Garcia-Heveling-Global, Kunzinger-Steinbauer-22}.
Furthermore, Allen, Burtscher,
Kunzinger, and Steinbauer \cite{Allen-Null, Allen-Burtscher-22, Kunzinger-Steinbauer-22} have applied Gromov's original compactness theorem to obtain a GH limit space which is only a metric space with no causal structure unlike our timed-metric-spaces.

Other notions of weak convergence of space-times do not involve the conversion of the space-times into metric spaces or timed-metric-spaces.   Minguzzi--Suhr \cite{Minguzzi-Suhr-24} have a compactness result in the setting of ``Lorentzian metric spaces" (which are not metric spaces) building upon work of  Noldus \cite{Noldus-limit,Noldus}. See also the
work of Mondino--S\"amann
 \cite{Mondino-Saemann-2025} in the setting of ``Lorentzian pre-length spaces'', as introduced in \cite{Kunzinger-Saemann}, where they define a notion of Lorentzian Gromov--Hausdorff convergence and obtain a precompactness result using causal diamonds and Lorentzian distances.  
 It is not yet clear how their notions of convergence relate to the Sakovich--Sormani notions defined using timed-metric-spaces in \cite{SakSor-Notions}.  This is worth exploring further.

We note that, in upcoming work, Sakovich--Sormani will be introducing timed intrinsic flat convergence of space-times whose limit spaces are rectifiable timed-metric-spaces with biLipchitz charts \cite{SakSor-SIF}.   Our compactness theorem proven in this paper will be applied in that one in an essential way.

The structure of the paper is as follows. In Section~\ref{sec:Background} we review preliminary notions and results about the Gromov--Hausdorff distance, the classical Gromov's compactness theorem, and the intrinsic timed-Hausdorff distance. Our compactness theorem is proven in Sections~\ref{sect:p1}-~\ref{sect:p3}.  In Section~\ref{sect:p1} we begin the proof by imitating Gromov's proof of his compactness theorem in \cite{Gromov-poly}.  As we added significant details to his original proof, we developed the notion of ``addresses" for points in Section~\ref{sect:p2}.   With these ``addresses'' we complete a stronger statement of the compactness theorem which controls the convergence of time in a uniform way in Section~\ref{sect:p3}.

Our notion of ``addresses" provides a new way of stating Gromov's Compactness Theorem for sequences
of metric spaces, that we state and prove in Section~\ref{sect:ap1}.  We also restate an Arzel\`a--Ascoli Theorem for sequences of real valued uniformly bounded Lipschitz functions on a converging sequence of metric spaces using ``addresses'' in 
Section~\ref{sect:ap2}.   We suggest a more general Arzel\`a--Ascoli Conjecture that could be proven using ``addresses'' in 
Section~\ref{sect:ap3}.
We finalize the paper establishing in the Appendix \ref{sec-appendix}
the triangle inequality for the intrinsic timed Hausdorff distance.

{\bf Acknowledgements:} We would like to thank Anna Sakovich (Uppsala University) for online conversations in July 2025 that lead us to pursuing this project.
We all gratefully acknowledge support from the Simons Center for Geometry and Physics and Stony Brook University where we had the opportunity to meet together in person in September 2025 as part of the Program {\em Geometry and Convergence in Mathematical General Relativity}. We would also like to thank Davide Carazzato (University of Vienna) for sharing ideas on the proof of the triangle inequality for the intrinsic timed-Hausdorff distance.
 R. Perales was funded by 
the Austrian Science Fund
(FWF) [Grant DOI: 10.55776/EFP6]. M.~Che was funded by the Austrian Science Fund (FWF) [Grant DOI: 10.55776/STA32]. C. Sormani was funded in part by a PSC-CUNY grant. 
  For open access purposes, the authors have applied a CC BY public copyright license to any author-accepted manuscript version arising from this submission.

\section{\bf Background}\label{sec:Background}

\subsection{\bf Review of the Hausdorff Distance}
\label{sect:GH1}

The {\bf Hausdorff distance} between subsets, 
$U,W\subset Z$, lying in a common metric space, $(Z,d_Z)$,
is defined by
\be\label{eq:defn-Hausdorff}
d_H^Z\Big(U,W\Big)=\inf
\left\{ r>0:  
\begin{array}{c} \forall u\in U, \exists w\in W 
\textrm{ s.t. }
d_Z(u,w)<r\\
\forall w\in W, \exists u\in U
\textrm{ s.t. }
d_Z(u,w)<r
\end{array}
\right\}.
\ee

Recall that the Hausdorff distance between sets, $U,W\subset Z$, is extrinsic in the sense that it depends on the distance in   the extrinsic space.

\subsection{\bf Review of Gromov--Hausdorff Distance}
\label{sect:GH2}

Gromov defined an intrinsic Hausdorff distance between
metric spaces, $(X_j,d_j)$, which do not lie in a common metric space in \cite{Gromov-metric}.  This is now called
the Gromov--Hausdorff (GH) distance. The GH distance is intrinsic in the sense that it depends only on the intrinsic geometry of the two given metric spaces and not the extrinsic geometry of some common ambient space.  

In order to define the Gromov-Hausdorff distance between two compact metric spaces, $(X_j,d_j)$, consider all possible common metric spaces, $(Z,d_Z)$, and all maps:
$
\varphi_j\colon X_j\to Z
$
that are distance preserving, 
\be
d_Z(\varphi_j(p),\varphi_j(q))=d_j(p,q) \qquad \forall p,q \in X_j,
\ee
and take the following infimum:
\be
d_{GH}\bigg((X_1,d_1),(X_2,d_2)\bigg)=\inf d_H^Z\bigg(\varphi_1(X_1),\varphi_2(X_2)\bigg).
\ee

Thanks to the infimum taken over all possible distance preserving maps, the Gromov--Hausdorff distance depends only on the intrinsic geometry of the original pair of metric spaces.  Yet at the same time the GH distance between metric spaces has many of the same properties as the Hausdorff distance.   In particular, there are correspondences between $X_1$ and $X_2$ with controlled distance distortion that are often used to define the GH distance, but we will not use correspondences in this paper.

\subsection{\bf Review of Gromov's Compactness Theorem}\label{sect:GH3}

Recall Gromov's Compactness Theorem
proven in \cite{Gromov-metric} and \cite{Gromov-poly}:

\begin{thm}[Gromov Compactness Theorem]\label{thm:Gromov-compactness}
If $(X_j,d_j)$ is a sequence of compact metric spaces that are equibounded, 
\be\label{eq:equibounded}
\exists D>0 \,s.t.\, \diam_{d_j}(X_j)\le D,
\ee
equicompact,
\be\label{eq:equicompact}
\forall R\in (0,D]\,
\,\exists N(R) \in \mathbb N \,s.t.\,
\exists x^j_{R,i}\in X_j\,\, s.t.\,\,
X_j\subset \bigcup_{i=1}^{N(R)} B(x^j_{R,i},R),
\ee
then a subsequence converges in the Gromov--Hausdorff sense to a compact metric space $(X_\infty, d_\infty)$:
\be
d_{GH}((X_j,d_j),(X_\infty,d_\infty))\to 0.
\ee
In fact, there exist a
compact metric space, $(Z,d_Z)$, 
and distance  preserving maps,
\be
\varphi_j\colon X_j \to  Z,
\ee
such that the Hausdorff distance between the images converges to zero:
\be
d_H^{Z}\bigg(\varphi_j(X_j),\varphi_\infty(X_\infty)\bigg)\to 0.
\ee
\end{thm}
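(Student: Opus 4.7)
The plan is to follow Gromov's classical diagonal extraction: pass to a subsequence along which the pairwise distances of finite $\epsilon$-nets stabilize at every scale, build $X_\infty$ as the abstract completion of these stabilized nets, and then realize both the $X_j$ and $X_\infty$ isometrically inside a single compact ambient space $Z$.

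First, I would fix $\epsilon_k = 2^{-k}$. By equicompactness, each $X_j$ admits an $\epsilon_k$-net of cardinality at most $N_k := N(\epsilon_k)$, which I label $\{x^j_{k,i}\}$ (padding by repetition so the cardinality is exactly $N_k$). For each fixed $k$ the matrix of pairwise distances among these $N_k$ points lies in the compact box $[0,D]^{N_k^2}$, so a subsequence converges. A Cantor diagonal over $k$ then yields a single subsequence, still indexed by $j$, along which $d_j(x^j_{k,i}, x^j_{k,i'})$ converges to some $d^\infty_{k,i,i'}$ for every $k,i,i'$. A further pigeonhole extraction forces each $\epsilon_{k+1}$-net-point to have a stable \emph{parent} in the $\epsilon_k$-net within distance $\epsilon_k$, producing a coherent tree structure---the paper's ``addresses''---shared by all sufficiently large $X_j$.

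I would define $X_\infty$ as the set of infinite address threads $(i_1,i_2,\dots)$ respecting the parent relations, modulo the equivalence identifying threads whose stabilized distance tends to $0$, and equip it with the limiting distance $d_\infty((i_k),(i'_k)) := \lim_k d^\infty_{k,i_k,i'_k}$. Non-negativity and the triangle inequality pass to the limit, $X_\infty$ is totally bounded (its $\epsilon_k$-net has at most $N_k$ points), and completeness follows from the stabilization of Cauchy thread sequences, so $X_\infty$ is a compact metric space. For the ambient space, I would take $Z_0 := X_\infty \sqcup \bigsqcup_j X_j$ with the intrinsic metrics on each piece and cross-distances between $x \in X_j$ and a thread $y \in X_\infty$ with address $(i_k)$ defined by minimizing $d_j(x, x^j_{k,i_k}) + 2\epsilon_k$ over scales $k$ (and analogously between different $X_j$ by routing through $X_\infty$). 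After quotienting by the null-distance relation and taking a closure, $Z$ becomes compact, the induced inclusions $\varphi_j, \varphi_\infty$ are distance preserving, and $d_H^Z(\varphi_j(X_j),\varphi_\infty(X_\infty)) \to 0$ because every point of $X_j$ lies within $\epsilon_k$ of a net-point matched to the corresponding thread of $X_\infty$ up to an error that vanishes with $k$ and $j$.

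The main obstacle, I expect, is the bookkeeping needed to make the cross-distances on $Z_0$ symmetric and satisfy the triangle inequality simultaneously at every scale and across the whole family $\{X_j\}\cup\{X_\infty\}$, while also ensuring that the resulting quotient is genuinely compact rather than merely totally bounded along a countable dense subset. This is precisely what the ``addresses'' formalism of Sections~\ref{sect:p1}--\ref{sect:p2} is engineered to streamline, and in practice I would replace the ad hoc parent-chain construction above with that language rather than re-deriving the bookkeeping by hand.
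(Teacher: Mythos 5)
Your extraction of a subsequence with stabilized finite nets, and your construction of $X_\infty$ as equivalence classes of coherent address threads with the limiting distance, is essentially the same route the paper takes in Propositions~\ref{prop:selection} and~\ref{prop:addresses} and the first part of the proof of Theorem~\ref{thm:timed-Gromov-compactness-A} (following Gromov and Burago--Burago--Ivanov). The one minor structural difference is that the paper builds the nested parent relation directly into the choice of nets, whereas you choose nets first and then stabilize the parent map by pigeonhole; both work.

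Where you diverge materially is the construction of the common ambient space $Z$, and here there is a genuine gap, not just bookkeeping. The paper realizes each $X_j$ and the limit $\bar X_\infty$ inside $\ell^\infty$ via Fr\'echet maps $\kappa_{\mathcal{N}_j}(x)=(d_j(x^j_a,x))_{a\in A}$, which are automatically distance preserving, and then takes $Z$ to be the union of the images; since $\ell^\infty$ is already a complete metric space, symmetry, the triangle inequality and completeness come for free, and both $d_H^Z\to 0$ and the compactness of $Z$ fall out of the uniform address estimate~\eqref{eq:alpha-A-G}. You instead propose to metrize $X_\infty\sqcup\bigsqcup_j X_j$ with cross-distances $d(x,y)=\inf_k\bigl[d_j(x,x^j_{k,i_k})+2\epsilon_k\bigr]$ for $x\in X_j$ and $y\in X_\infty$ a thread $(i_k)$. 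That formula has a concrete defect. Take $x,x'\in X_j$ and $y\in X_\infty$ with the two infima achieved at scales $k_1<k_2$. The chain condition only gives $d_j(x^j_{k_1,i_{k_1}},x^j_{k_2,i_{k_2}})<2(\epsilon_{k_1}+\dots+\epsilon_{k_2-1})$, which can approach $4\epsilon_{k_1}$ and strictly exceeds the available budget $2\epsilon_{k_1}+2\epsilon_{k_2}$ whenever $k_1<k_2$ (for instance $k_2=k_1+2$ gives a chain bound of $3\epsilon_{k_1}$ against a budget of $2.5\,\epsilon_{k_1}$), so $d(x,x')\le d(x,y)+d(y,x')$ is not ensured. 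Symmetry is also unclear, since a general $x\in X_j$ has no canonical address. You would need to retune the padding constants or switch to the standard correspondence-gluing metric, and then still show the completion is compact. The address formalism by itself does not close this; in the paper it is the switch to Fr\'echet embeddings into $\ell^\infty$ that does.
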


\subsection{\bf Review of Fr\'echet Maps}\label{sect:back-Frechet}\label{sect:F1}

Here we review the notion of a Fr\'echet map \cite{Frechet1910}. \footnote{In v1 of \cite{SakSor-Notions} this was accidentally attributed to Kuratowski. The actual definition of a Kuratowski map can be consulted here \cite{Kuratowski1935}.}

First recall that a separable metric space, $(X,d)$,  is a metric space containing a countable dense collection of points.

Recall also the Banach space  
\be\label{eq:defn-ell-infty}
\ell^\infty=\{(s_1,s_2,...)\,: s_i \in {\mathbb{R}}, \, d_{\ell^\infty}((s_1,s_2,...),(0,0,...))<\infty\}
\ee
where   
\be\label{eq:d-ell-infty}
d_{\ell^\infty}((s_1,s_2,...),(r_1,r_2,...))
=\sup\{|s_i-r_i|\,:\, i\in {\mathbb N}\}.
\ee

\begin{defn}\label{defn:Frechet}
Given a separable and bounded metric space, $(X,d)$, 
for any countable dense collection of points $\mathcal{N}=\{x_1,x_2,...\}$ of $X$, 
one can define a Fr\'echet map
\be
\kappa_X=\kappa_{X,\mathcal{N}}\colon (X,d)\to (\ell^\infty, d_{\ell^\infty})
\ee
by
\be\label{eq:Frechet}
\kappa_X(x)=(d(x_1,x),d(x_2,x),\ldots)\subset \ell^\infty.
\ee
\end{defn}

We remark that one can define Fr\'echet maps for unbounded metric spaces by fixing a reference point $x_0\in X$ and setting
\begin{equation}
\kappa_X(x) = (d(x_1,x)-d(x_1,x_0),d(x_2,x)-d(x_2,x_0),\ldots) \in \ell^\infty.
\end{equation}
However, since our main result, Theorem~\ref{thm:timed-Gromov-compactness}, concerns \textit{compact} timed-metric-spaces, Definition~\ref{defn:Frechet} suffices for our purposes. 

Now we state Fr\'echet's embedding theorem \cite{Frechet}:

\begin{thm}[Fr\'echet]
\label{thm:K-dist-pres}
Fr\'echet maps as in Definition~\ref{defn:Frechet}
are distance preserving:
\be\label{eq:K-dist-pres}
d_{\ell^\infty}(\kappa_X(x),\kappa_X(y))=
d(x,y) \quad \forall \, x,y\in X.
\ee
\end{thm}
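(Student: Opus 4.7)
The plan is to unwind the definition of $d_{\ell^\infty}$ and apply the triangle inequality in two directions: one application yields the upper bound $d_{\ell^\infty}(\kappa_X(x),\kappa_X(y)) \le d(x,y)$ coordinate by coordinate, and the reverse inequality is extracted by exploiting the density of the chosen anchor set $\mathcal{N} = \{x_1, x_2, \dots\}$.

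First I would write out, for arbitrary $x, y \in X$, the identity
\[
d_{\ell^\infty}(\kappa_X(x), \kappa_X(y)) = \sup_{i \in \mathbb{N}} |d(x_i, x) - d(x_i, y)|,
\]
which is immediate from \eqref{eq:d-ell-infty} and \eqref{eq:Frechet}. The reverse triangle inequality applied to the triple $(x_i, x, y)$ gives $|d(x_i, x) - d(x_i, y)| \le d(x, y)$ for every $i$, so passing to the supremum produces the upper bound $d_{\ell^\infty}(\kappa_X(x), \kappa_X(y)) \le d(x, y)$.

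For the matching lower bound, I would fix $\varepsilon > 0$ and use the density of $\mathcal{N}$ in $X$ to pick an index $i_0$ with $d(x_{i_0}, x) < \varepsilon$. The triangle inequality then gives $d(x_{i_0}, y) \ge d(x, y) - d(x_{i_0}, x) > d(x, y) - \varepsilon$, so
\[
|d(x_{i_0}, x) - d(x_{i_0}, y)| \ge d(x_{i_0}, y) - d(x_{i_0}, x) > d(x, y) - 2\varepsilon.
\]
This single coordinate already witnesses $d_{\ell^\infty}(\kappa_X(x), \kappa_X(y)) > d(x, y) - 2\varepsilon$, and letting $\varepsilon \to 0$ closes the gap to yield \eqref{eq:K-dist-pres}.

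There is no serious obstacle: the whole argument is two applications of the triangle inequality plus density. The one bookkeeping point worth flagging before starting is that $\kappa_X(x)$ genuinely lands in $\ell^\infty$, which requires $\sup_i d(x_i, x) < \infty$; this is automatic whenever $(X, d)$ is bounded (the setting relevant to the compactness theorem above), and can otherwise be arranged by fixing a basepoint $x_0 \in \mathcal{N}$ and replacing each coordinate $d(x_i, x)$ by $d(x_i, x) - d(x_i, x_0)$, without affecting any of the estimates in the proof.
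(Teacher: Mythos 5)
Your proof is correct and complete; it is the standard two-sided argument (reverse triangle inequality for the upper bound, density of $\mathcal{N}$ for the matching lower bound). The paper itself only states Theorem~\ref{thm:K-dist-pres} as a known classical result attributed to Fr\'echet and does not reproduce a proof, so there is nothing in the source to compare against line by line; your argument is exactly the one the authors are implicitly invoking.

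One small remark on your closing ``bookkeeping'' paragraph: the fix you describe, replacing each coordinate $d(x_i,x)$ by $d(x_i,x)-d(x_i,x_0)$, is precisely the Kuratowski variant of the embedding rather than the Fr\'echet map as defined in Definition~\ref{defn:Frechet} (the paper even has a footnote about keeping the two attributions straight). It is a valid way to ensure the image lies in $\ell^\infty$ for unbounded separable spaces, and it leaves the estimate $|(d(x_i,x)-d(x_i,x_0)) - (d(x_i,y)-d(x_i,x_0))| = |d(x_i,x)-d(x_i,y)|$ unchanged, so the distance-preservation proof goes through verbatim; but if you use it you should say explicitly that you are switching to the Kuratowski normalization rather than modifying the Fr\'echet map as given. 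In the setting of this paper the spaces are compact, hence bounded, so the issue does not arise.
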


It is well known that
one can define a version of the Gromov--Hausdorff distance by taking the infimum of the Hausdorff distance between images of Fr\'echet maps into
$\ell^\infty$.

\begin{defn}\label{defn:kappa-GH}
The Fr\'echet-GH distance between two metric spaces is defined as
\be
d_{\kappa-GH}((X,d_X),(Y,d_Y))
=\inf d_H^{\ell^\infty}(\kappa_X(X),\kappa_Y(Y))
\ee
where the infimum is over all pairs of Fr\'echet maps
\be
\kappa_X\colon X \to \ell^\infty
\quad\textrm{ and } \quad
\kappa_Y\colon Y \to \ell^\infty,
\ee
that is, the infimum is taken over
all selections of countably dense points and reorderings of these selected points in
$X$ and in $Y$.
\end{defn}

By Fr\'echet's embedding theorem, for any pair of compact metric-spaces one immediately has
\be\label{eq:kappa-ge}
d_{\kappa-GH}((X,d_X),(Y,d_Y))\ge
d_{GH}((X,d_X),(Y,d_Y)).
\ee
In fact, this notion is biLipschitz equivalent to
the Gromov--Hausdorff distance:
\be \label{eq:kappa-GH-comp}
d_{GH}((X,d_X),(Y,d_Y))
\le d_{\kappa-GH}((X,d_X),(Y,d_Y))\le
2 d_{GH}((X,d_X),(Y,d_Y)).
\ee
This close relationship with the Gromov--Hausdorff distance
is discussed and reproven in the background of \cite{SakSor-Notions} and
directly inspired their definition of the intrinsic
timed-Hausdorff distance using timed-Fr\'echet maps.

\subsection{\bf Review of Timed-Metric-Spaces}
\label{sect:SS1}
The notion of a timed-metric-space was introduced by Sakovich--Sormani in \cite{SakSor-Notions} because they had constructed a canonical way to convert certain classes of smooth Lorentzian space-times into timed-metric-spaces keeping track of the causal structure of the original Lorentzian manifold.   For our purposes, we need only the abstract notion of a timed-metric-space:

\begin{defn}\label{defn:timed-metric-space}
A {\bf timed-metric-space}, 
$(X,d,\tau)$, is a metric space, $(X,d)$ \footnote{$
d\colon X\times X\to [0,\infty)$ is a definite, symmetric map satisfying the triangle inequality}, 
endowed with a $1$-Lipschitz time function,  
$
\tau\colon X\to [0,\infty),
$
and a causal structure defined as follows: 
\be\label{eq:causal}
\forall q \in X\quad p\in J_X^+(q) \iff \tau(p)-\tau(q)=d(p,q).
\ee
\end{defn}

Given a map $F\colon(X,d_X,\tau_X)\to (Y,d_Y,\tau_Y)$ 
we say that it is {\bf distance preserving} iff
\be
d_Y(F(p),F(q))=d_X(p,q) \qquad \forall p,q\in X.
\ee
We say that $F$ is {\bf time preserving} iff
\be
\tau_Y(F(p))=\tau_X(p) \qquad \forall p\in X.
\ee
We say that $F$ {\bf preserves the causal structure} iff
\be
F(p)\in J^+_Y(F(q)) \iff p\in J^+_X(q)
\ee
which happens when $F$ is both distance and time preserving.

In this paper we focus on the geometry of timed-metric-spaces and the intrinsic timed-Hausdorff distance.   Our work applies to study Lorentzian manifolds and their intrinsic timed-Hausdorff limits, but we will not be exploring this within the paper.  Hence, we will not refer further to smooth manifolds.

\subsection{\bf Review of Timed-Fr\'echet Maps}\label{sect:SS2}
Sakovich--Sormani introduced the following notion in
\cite[Definition~4.21]{SakSor-Notions}:

\begin{defn}\label{defn:tau-K}
Given a compact timed-metric-space, $(X,d,\tau)$,
and a countably dense collection of points,
\be
{\mathcal N}=\{
x_1,x_2,x_3,\ldots\}\subset X,
\ee
we define a
timed-Fr\'echet map\footnote{In v1 of \cite{SakSor-Notions} these were accidentally named timed-Fr\'echet maps and this will be fixed.} 
\index{pa@$\kappa_{\tau,X}=\kappa_{\tau,X,{\mathcal N}}$}
$
\kappa_{\tau,X}
=\kappa_{\tau,X,{\mathcal N}}\colon X \to [0,\tau_{max}]\times \ell^\infty\subset \ell^\infty
$,
by
\be
\kappa_{\tau,X}(x)=
(\tau(x),\kappa_{X,{\mathcal N}}(x))=
(\tau(x), d(x_1,x),
d(x_2,x),
d(x_3,x),...).
\ee
\end{defn}

For the reader's convenience, we state here propositions~4.24 and 4.25 from \cite{SakSor-Notions}.

\begin{prop}\label{prop:tau-K-dist-pres}
Any
timed-Fr\'echet map, 
$
\kappa_{\tau,X}\colon X \to [0,\tau_{max}]\times \ell^\infty\subset \ell^\infty,
$
is distance preserving,
\be
d_{\ell^\infty}(\kappa_{\tau,X}(x),
\kappa_{\tau,X}(y))=d(x,y),
\ee
for any separable, bounded 
metric space, $(X,d)$, with a $1$-Lipschitz time function, $\tau\colon X\to[0,\tau_{max}]$
and any choice of countably dense points in $X$.
\end{prop}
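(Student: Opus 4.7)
The plan is to observe that the timed-Fréchet map $\kappa_{\tau,X}$ is nothing but the ordinary Fréchet map $\kappa_{X,\mathcal{N}}$ with one extra coordinate, namely $\tau(x)$, prepended. Since the metric on $\ell^\infty$ is a supremum over coordinates, adding a coordinate can only increase the distance, so we immediately get
\be
d_{\ell^\infty}\bigl(\kappa_{\tau,X}(x),\kappa_{\tau,X}(y)\bigr)
\ge d_{\ell^\infty}\bigl(\kappa_{X,\mathcal{N}}(x),\kappa_{X,\mathcal{N}}(y)\bigr)
= d(x,y),
\ee
where the last equality is Fréchet's Theorem~\ref{thm:K-dist-pres}. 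This handles the lower bound without any additional work.

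For the matching upper bound, I would estimate each coordinate of $\kappa_{\tau,X}(x)-\kappa_{\tau,X}(y)$ separately. For each $i\in\mathbb{N}$, the reverse triangle inequality in $(X,d)$ gives
\be
\bigl|d(x_i,x)-d(x_i,y)\bigr|\le d(x,y),
\ee
which already proves Fréchet's Theorem and handles all of the $\ell^\infty$ coordinates inherited from $\kappa_{X,\mathcal{N}}$. For the new zeroth coordinate, I use the hypothesis that $\tau$ is Lipschitz one, which by Definition~\ref{defn:timed-metric-space} means exactly
\be
\bigl|\tau(x)-\tau(y)\bigr|\le d(x,y).
\ee
Taking the supremum over this coordinate together with all the $i\ge 1$ coordinates gives
\be
d_{\ell^\infty}\bigl(\kappa_{\tau,X}(x),\kappa_{\tau,X}(y)\bigr)\le d(x,y),
\ee
and combined with the lower bound we conclude equality.

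There is no real obstacle here: both inequalities are one-line consequences of already established facts. The only point to be careful about is that the statement explicitly allows $(X,d)$ to be merely separable (with countably dense $\mathcal{N}$), not compact, so the proof should never invoke compactness; the reverse triangle inequality and the Lipschitz-one condition are purely local and do not need it. I would also note that this argument shows something slightly stronger, namely that the map is distance preserving regardless of the enumeration of $\mathcal{N}$ and regardless of whether $\tau$ is bounded, which will be convenient later when comparing different choices of dense points.
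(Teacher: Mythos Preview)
Your argument is correct. Note, however, that the paper itself does not supply a proof of this proposition: it is stated ``for the reader's convenience'' as one of Propositions~4.24 and~4.25 of \cite{SakSor-Notions}, with the proof deferred to that reference. Your approach---bounding the time coordinate by the Lipschitz-one hypothesis, bounding each Fr\'echet coordinate by the reverse triangle inequality, and invoking Theorem~\ref{thm:K-dist-pres} for equality on the remaining coordinates---is the natural one and is almost certainly what appears in the cited source. There is nothing to correct.
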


\begin{prop}\label{prop:tau-K-time-pres}
If we consider the infinite dimensional
timed-metric-space
\be
([0,\tau_{max}]\times \ell^\infty,
d_{\ell^\infty},\tau_\infty
),
\ee
where 
\be
\tau_\infty(w_0,w_1,w_2,\ldots)=w_0,
\ee
and endowed with a causal structure
defined by
\be
\forall z \in [0,\tau_{max}]\times \ell^\infty \quad
w\in J_\infty^+(z)
\iff 
\tau_\infty(w)-\tau_\infty(z)=
d_{\ell^\infty}(w,z),
\ee
then every timed-Fr\'echet map 
$
\kappa_{\tau,X}\colon X \to [0,\tau_{max}]\times \ell^\infty\subset \ell^\infty
$
is time preserving:
\be\label{eq:tau-K-time-pres}
\tau_\infty(\kappa_{\tau,X}(x))=\tau(x)
\ee
and preserves causality:
\be\label{eq:tau-K-causal-pres}
\kappa_{\tau,X}(p)\in J_\infty^+(\kappa_{\tau,X}(q))
\iff p\in J_X^+(q)
\ee
for any separable, bounded metric space, $(X,d)$, with a $1$-Lipschitz time function, $\tau\colon X\to[0,\tau_{max}]$ that
encodes causality as in (\ref{eq:causal}).
\end{prop}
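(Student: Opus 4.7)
The plan is to establish both claims by direct unpacking of the definitions, using Proposition~\ref{prop:tau-K-dist-pres} as the only nontrivial ingredient. Time preservation will be essentially automatic from the construction of $\kappa_{\tau,X}$, and causality preservation will then reduce to combining time preservation with distance preservation and invoking the defining equivalence \eqref{eq:causal} on both sides.

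First, I would verify \eqref{eq:tau-K-time-pres}. By Definition~\ref{defn:tau-K}, the image $\kappa_{\tau,X}(x)$ has $\tau(x)$ as its zeroth coordinate, while by definition $\tau_\infty(w_0,w_1,w_2,\dots)=w_0$. Writing $\kappa_{\tau,X}(x)=(\tau(x),d(x_1,x),d(x_2,x),\dots)$ and reading off the first coordinate therefore gives
\[
\tau_\infty(\kappa_{\tau,X}(x))=\tau(x),
\]
which is exactly \eqref{eq:tau-K-time-pres}.

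Next, for \eqref{eq:tau-K-causal-pres}, I would expand the defining condition for $J^+$ in $([0,\tau_{max}]\times \ell^\infty, d_{\ell^\infty},\tau_\infty)$. By the hypothesis on the causal structure of the target,
\[
\kappa_{\tau,X}(p)\in J^+(\kappa_{\tau,X}(q)) \iff \tau_\infty(\kappa_{\tau,X}(p))-\tau_\infty(\kappa_{\tau,X}(q))=d_{\ell^\infty}(\kappa_{\tau,X}(p),\kappa_{\tau,X}(q)).
\]
Applying the just-verified time preservation to the left-hand side and Proposition~\ref{prop:tau-K-dist-pres} to the right-hand side simplifies this equivalence to $\tau(p)-\tau(q)=d(p,q)$, which by \eqref{eq:causal} applied inside $(X,d,\tau)$ is exactly $p\in J^+(q)$.

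I do not expect any real obstacle here: every step is definitional once Proposition~\ref{prop:tau-K-dist-pres} is in hand. The only point worth being slightly careful about is making sure the causal structure on the target $[0,\tau_{max}]\times \ell^\infty$ used in \eqref{eq:tau-K-causal-pres} is precisely the one defined in the statement (via $\tau_\infty$ and $d_{\ell^\infty}$), so that both $J^+$'s in the proof are the ones induced by formula \eqref{eq:causal} in their respective spaces; this is what makes the equivalence a clean tautology after substitution. No separability assumption or choice of dense sequence plays any role beyond what is already used to define $\kappa_{\tau,X}$ and to invoke the distance-preserving proposition.
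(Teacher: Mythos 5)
Your proof is correct and takes the only natural route: read off the zeroth coordinate for time preservation, then combine time preservation with the distance preservation of Proposition~\ref{prop:tau-K-dist-pres} and the defining formula \eqref{eq:causal} on both source and target to get causality preservation. Note that the paper itself does not supply a proof of this proposition---it is quoted from \cite{SakSor-Notions} ``for the reader's convenience''---but your argument is exactly the definitional unpacking one expects, and there is nothing to add or subtract from it.
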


\subsection{\bf Review of the Intrinsic Timed-Hausdorff Distance}\label{sect:SS3}

In \cite[Definition~4.22]{SakSor-Notions} Sakovich and Sormani introduced the following notion within their definition of the 
intrinsic timed-Hausdorff
distance between causally null compactifiable space-times:

\begin{defn}\label{defn:timed-H}
The {\bf
intrinsic 
timed-Hausdorff
distance
between two compact timed-metric-spaces},
$(X_i, d_i, \tau_i)$, $i=1,2$, is given by
\be \label{eq:tK-GH-1}
d^\kappa_{\tau-H}
\Big((X_1,d_1,\tau_1),(X_2,d_2,\tau_2)\Big)=
\inf\, d^{\ell^\infty}_H(
\kappa_{\tau_1,X_1}(X_1),
\kappa_{\tau_2,X_2}(X_2))
\ee
where the infimum is taken over all possible 
timed-Fr\'echet maps,
$\kappa_{\tau_1,X_1}\colon X_1\to \ell^\infty$ and
$\kappa_{\tau_2,X_2}\colon X_2\to \ell^\infty$ which are
found by considering all countable dense collections of points in the $X_i$ and all reorderings of these collections of points.
\end{defn}

They proved this notion of distance is definite in \cite[Theorem 5.14]{SakSor-Notions}, which we state below.

\begin{thm}\label{thm:definite-timed-H}
The intrinsic timed-Hausdorff distance between two compact timed-metric-spaces, as in definition~\ref{defn:timed-H}, is definite in the sense that:
\be \label{eq:tau-K-definite-1}
d^\kappa_{\tau-H}\Big((X,d_X,\tau_X),(Y,d_Y,\tau_Y)\Big)=0
\ee
if and only if there is a 
distance and time preserving bijection
\be\label{eq:tau-K-definite-2}
F\colon (X,d_X,\tau_X)\to (Y,d_Y,\tau_Y).
\ee
\end{thm}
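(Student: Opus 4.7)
The plan is to prove the two directions separately, with the backward implication the easier one. Suppose there is a distance and time preserving bijection $F:(X,d_X,\tau_X)\to(Y,d_Y,\tau_Y)$. Fix any ordered countably dense $\mathcal{N}_X=\{x_1,x_2,\dots\}\subset X$ and take $\mathcal{N}_Y:=\{F(x_1),F(x_2),\dots\}\subset Y$, which is still countably dense since an isometric bijection is a homeomorphism. Unraveling Definition~\ref{defn:tau-K} and using that $F$ preserves both $\tau$ and $d$ yields $\kappa_{\tau_Y,Y,\mathcal{N}_Y}(F(x))=\kappa_{\tau_X,X,\mathcal{N}_X}(x)$ for every $x\in X$, so the two images coincide in $\ell^\infty$ and the infimum (\ref{eq:tau-K-definite-1}) is witnessed to be zero.

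For the forward direction, suppose $d^\kappa_{\tau-H}=0$. For each $n$ pick ordered countably dense $\mathcal{N}^n_X\subset X$ and $\mathcal{N}^n_Y\subset Y$ for which the corresponding timed-Fr\'echet maps $\kappa^n_X,\kappa^n_Y$ satisfy
\be
d^{\ell^\infty}_H(\kappa^n_X(X),\kappa^n_Y(Y))<1/n.
\ee
Fix, independently of $n$, a countably dense sequence $\{x_i\}_{i=1}^\infty\subset X$, and for every $i,n$ select $y^n_i\in Y$ with $d_{\ell^\infty}(\kappa^n_X(x_i),\kappa^n_Y(y^n_i))<1/n$. By the distance preservation in Proposition~\ref{prop:tau-K-dist-pres} combined with the triangle inequality,
\be
|d_X(x_i,x_j)-d_Y(y^n_i,y^n_j)|<2/n,
\ee
and by the time preservation in Proposition~\ref{prop:tau-K-time-pres} together with the $1$-Lipschitz continuity of $\tau_\infty$ on $(\ell^\infty,d_{\ell^\infty})$,
\be
|\tau_X(x_i)-\tau_Y(y^n_i)|<1/n.
\ee

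Compactness of $Y$ and a Cantor diagonal extraction produce a subsequence $n_k$ along which $y^{n_k}_i\to y_i\in Y$ for every $i$. Define $F(x_i):=y_i$; passing to the limit in the two estimates above shows $F$ is distance and time preserving on the dense set $\{x_i\}$. Since $X$ and $Y$ are compact hence complete, $F$ extends uniquely to a distance and time preserving $F:X\to Y$, injective because any distance-preserving map is. For surjectivity, fix $y\in Y$ and, along the same subsequence $n_k$, use the opposite inclusion in the Hausdorff inequality to pick $x^{n_k}\in X$ with $d_{\ell^\infty}(\kappa^{n_k}_Y(y),\kappa^{n_k}_X(x^{n_k}))<1/n_k$; extract a further subsequence along which $x^{n_k}\to x\in X$. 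A computation analogous to the one above gives $|d_Y(y,y^{n_k}_i)-d_X(x^{n_k},x_i)|<2/n_k$, so in the limit $d_Y(y,F(x_i))=d_X(x,x_i)$, which tends to $0$ along any $x_i\to x$; continuity of $F$ then forces $F(x)=y$.

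The main obstacle is a bookkeeping issue: the base sets $\mathcal{N}^n_X,\mathcal{N}^n_Y$ indexing the $\ell^\infty$ coordinates change with $n$, so componentwise arguments are unavailable. This is handled by using only the coordinate-free consequences of Propositions~\ref{prop:tau-K-dist-pres} and \ref{prop:tau-K-time-pres}, namely that each individual $\kappa^n$ preserves distance and time, and by drawing all estimates through the ambient $\ell^\infty$ metric rather than through specific coordinates.
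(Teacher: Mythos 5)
The paper does not contain its own proof of this theorem; it is stated for the reader's convenience and cited from Sakovich--Sormani \cite[Theorem~5.14]{SakSor-Notions}, so there is no internal argument to compare against. Your proof is correct on its own terms and is essentially the natural one: the backward direction pushes a countably dense set of $X$ forward through the distance- and time-preserving bijection to obtain matching timed-Fr\'echet images; the forward direction uses the vanishing infimum to pick nearly aligned points $y^n_i$, passes the coordinate-free consequences of Propositions~\ref{prop:tau-K-dist-pres} and \ref{prop:tau-K-time-pres} through the triangle inequality and the $1$-Lipschitz projection $\tau_\infty$ to control distances and times, diagonalizes via compactness to get a distance- and time-preserving map on a dense set, extends by completeness, and obtains surjectivity by the symmetric extraction in $X$. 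You also correctly identify and neutralize the main subtlety, namely that the coordinate systems $\mathcal{N}^n_X,\mathcal{N}^n_Y$ change with $n$, by avoiding componentwise arguments and working entirely with the ambient $\ell^\infty$ metric.
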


\section{\bf Proving the Compactness Theorem with Addresses}

In this section we prove our compactness theorem:

\begin{thm}
[Timed Gromov Compactness Theorem]\label{thm:timed-Gromov-compactness-A}
If $(X_j,d_j,\tau_j)$ is a sequence of compact timed-metric-spaces that are equibounded, 
\be\label{eq:equibounded-A}
\exists D>0 \,s.t.\, \diam_{d_j}(X_j)\le D,
\ee
equicompact,
\be
\forall R\in (0,D]\,
\exists N(R) \in \mathbb N \,s.t.\,
\exists \{x^j_{R,i}:i=1,\ldots,N(R)\}\,\, s.t.\,\,
X_j\subset \bigcup_{i=1}^{N(R)} B(x^j_{R,i},R)
\ee
and have a uniform bounds on their $1$-Lipschitz functions 
\be
\tau_j(X_j)\subset [0,\tau_{max}],
\ee
then a subsequence converges in the intrinsic timed-Hausdorff sense to a compact timed-metric-space $(\bar{X}_\infty, d_\infty, \tau_\infty)$. In fact, 
up to a subsequence, 
there exist distance and time preserving 
timed-Fr\'echet maps,
\be \label{eq:tK-A1}
\varphi_j\colon  X_j \to [0,\tau_{max}]\times Z \subset \ell^\infty,
\ee
where $Z$ is compact, such that 
\be 
d_H^{[0,\tau_{max}]\times Z}(\varphi_j(X_j),\varphi_\infty(X_\infty))\to 0.
\ee
In addition, there is an uncountable set $\mathcal{A}$,
and surjective maps,
\be \label{eq:I-A}
{\mathcal I}^j\colon {\mathcal A} \to X_j
\quad\textrm{ and }\quad
{\mathcal I}^\infty\colon {\mathcal A} \to \bar{X}_\infty
\ee
such that we have uniform convergence
as $j\to \infty$,
\be \label{eq:alpha-A}
\sup_{\alpha \in {\mathcal A}}
d_{\ell^\infty}
\Big(\varphi_j({\mathcal I}^j(\alpha)),
\varphi_\infty({\mathcal I}^\infty(\alpha)) \Big)
\to 0.
\ee
In particular, we have uniform convergence of time,
\be \label{eq:sup-A}
\sup_{\alpha \in {\mathcal A}}
|\tau_j({\mathcal I}^j(\alpha))-
\tau_\infty({\mathcal I}^\infty(\alpha))|\to 0,
\ee
and uniform convergence of distances,
\be \label{eq:sup-A-d}
\sup_{\alpha,\alpha' \in {\mathcal A}}
|d_j({\mathcal I}^j(\alpha),{\mathcal I}^j(\alpha'))-
d_\infty({\mathcal I}^\infty(\alpha),{\mathcal I}^\infty(\alpha'))|\to 0.
\ee
\end{thm}

\subsection{\bf Selection of Balls in a Compact Metric Space}\label{sect:p1}

The following proposition is from the first part of Gromov's proof of his Compactness Criterion in \cite{Gromov-poly}. Since we will apply it later to prove our Theorem~\ref{thm:timed-Gromov-compactness-A} we provide more details in the proof and add 
in the precise indexing of the points $x_a$. 
The proposition allows us to carefully select a countable dense collection of points covering each metric space and indexing them in a way which keeps track of where each point is located. See Figure~\ref{fig:index}. We will apply this indexing later to define the index set of ``addresses" $\mathcal{A}$.

\begin{prop}\label{prop:selection}
Suppose $(X_j,d_j)_j$ is a sequence of metric spaces
satisfying the equiboundedness and equicompactness conditions as in the statement
of Theorem~\ref{thm:Gromov-compactness}.  
Take the sequence $\varepsilon_i = 2^{-i}$ and let $N_i=N(\varepsilon_i)$ be natural numbers such that $X_j$
can be covered by $N_i$ balls of radius $\varepsilon_i$.  For each $i\in {\mathbb N}$, let $A_i$ be the finite set, 
\be \label{eq:index}
A_i= \Bigl\{(a_1, a_2, \ldots, a_i): \, 1 \leq a_j \leq N_j, j=1,\ldots, i \Bigr\},
\ee
$p_i: A_{i+1} \to A_i$ the projection maps
$p_i(a_1,\ldots,a_{i+1})=(a_1,\ldots,a_{i})$
and
\be\label{eq:A-cntble}
A=\bigsqcup_{i=1}^{\infty }A_i.
\ee
Then there exists a countable dense subset 
\be\label{eq:Nj}
{\mathcal{N}}_j = \{x_a^j:\, a\in A\} \subset X_j
\ee
and maps
\be\label{eq:Iji}
I^j_i \colon  A_i \to X_j \quad\textrm{ and }\quad I^j\colon  A\to X_j,
\ee
given by
\be
I^j(a)=I^j_i(a)=x_a^j \in X_j \qquad \forall a \in A_i \subset A, 
\ee
that satisfy the following conditions:
\newline
Firstly, each $I^j_i(A_i)$ is an $\varepsilon_i$-net of $X_j$, which means
\be \label{eq:cover}
X_j \subset \bigcup_{a\in A_i} B_{d_j}(x^j_a,\varepsilon_i).
\ee
Secondly, for each $i,j \in \mathbb N$ and $a \in A_{i+1}$,  
\be \label{eq:in-0}
I^j_{i+1}(a) \in B_{d_j}(I^j_i(p_i(a)), 2\varepsilon_i)\subset X_j,
\ee
which means
\be \label{eq:in}
x^j_{(a_1,...a_i,a_{i+1})}
\in B_{d_j}(x^j_{(a_1,...a_i)},2\varepsilon_i)
\ee
for any $a=(a_1,\dots,a_{i+1})\in A_{i+1}$.
\end{prop}

\begin{figure}[h] 
   \centering
   \includegraphics[width=5in]{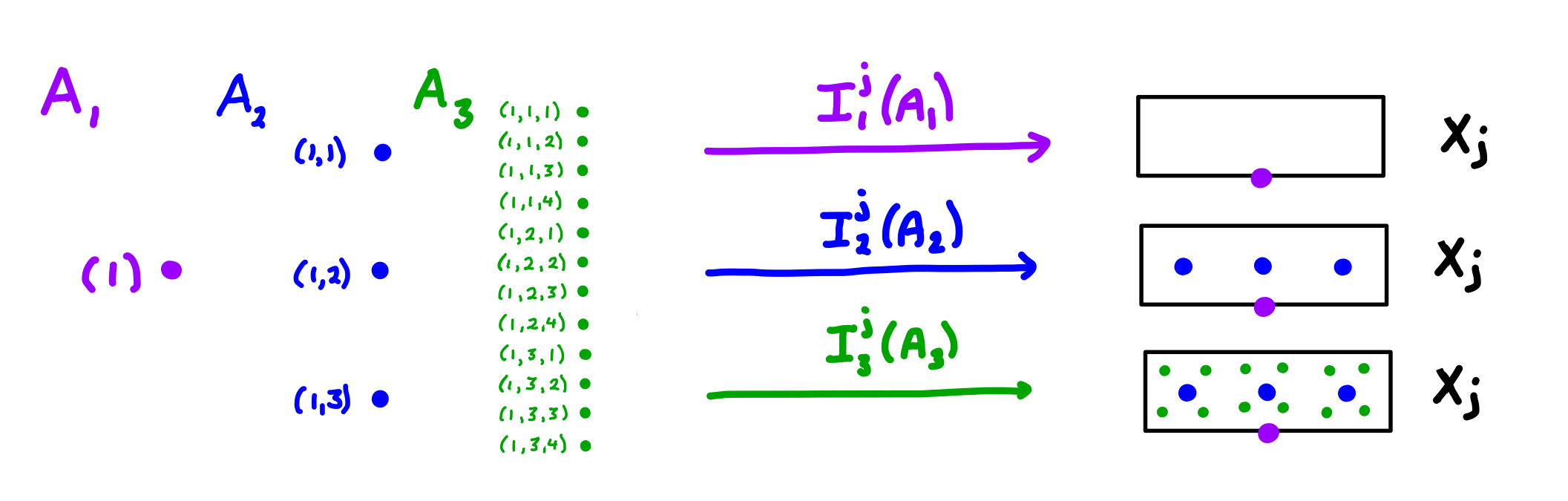}
   \caption{An index set with $N(1)=1, N(2)=3, N(3)=4$ and the corresponding points in $X_j$.
   }\label{fig:index}
\end{figure}

\begin{proof}
We will define $A_i$, $p_i$ and $I^j_i$ 
satisfying (\ref{eq:cover}) and (\ref{eq:in})
by induction on $i\in {\mathbb N}$. Note that, once this induction is completed, 
we can see that the 
set $I^j(A)$ is dense in $X_j$ because
we can take any $x\in X_j$ and any $\delta>0$, and choose
$i$ sufficiently large that $\varepsilon_i<\delta$, and
then apply (\ref{eq:cover}) to see that, 
\be
\exists x^j_a\in I^j(A)
\,\,s.t.\,\,d_j(x, x^j_a)<\varepsilon_i<\delta.
\ee
By construction, the set $A$ and its image $I^j(A)$ are
countable because they are countable unions of finite sets, $A_i$, and their images, $I^j_i(A_i)$, respectively. Thus 
we need only complete the induction to prove this proposition.

We begin the induction argument with $i=1$.  By equiboundedness and the definition of
$N_1=N(\varepsilon_1)$, we have
an $\varepsilon_1$-net of points,
\be
\{x^j_{(1)},x^j_{(2)},x^j_{(3)},...x^j_{(N_1)}\}\subset X 
\ee
indexed by 
\be
A_1=\{(1),(2),\ldots,(N_1)\}
\ee
such that 
\be \label{eq:BC1}
X_j \subset \bigcup_{a\in A_1} B_{d_j}(x^j_a,\varepsilon_1)
=\bigcup_{a_1=1}^{N_1} B_{d_j}(x^j_{(a_1)},\varepsilon_1)
\ee
which gives us (\ref{eq:cover}) for
$i=1$.   

By equiboundedness and the definition of
$N_2=N(\varepsilon_2)$, for each ball of radius $\varepsilon_1$ (indexed by $a_1=1,2,...,N_1$) as above, 
we can choose $N_2$ points in $X_j$ such that
\be \label{eq:BC2}
B_{d_j}(x^j_{(a_1)},\varepsilon_1) \subset
\bigcup_{a_2=1}^{N_2} B_{d_j}(x^j_{(a_1,a_2)},\varepsilon_2)
\ee
with 
\be
B_{d_j}(x^j_{(a_1)},\varepsilon_1) \cap
B_{d_j}(x^j_{(a_1,a_2)},\varepsilon_2)\neq \emptyset  \qquad a_2=1,\ldots,N_2. 
\ee
Thus 
\be\label{eq:in-1}
x^j_{(a_1,a_2)} \in  B_{d_j}(x^j_{(a_1)},2\varepsilon_1)\subset X_j,
\ee
So we have an index set,
\be
A_2=\{(a_1,a_2):  1 \leq a_1\leq N_1,\, 1 \leq a_2 \leq N_2\},
\ee
and a projection map
\be
p_1 \colon  A_2 \to A_1 \qquad p_1\colon (a_1,a_2) \mapsto (a_1)
\ee
and the map
$
I^j_2 \colon  A_2 \to X_j
$
identifying the center points, 
\be
x^j_{(a_1,a_2)}=I^j_2((a_1,a_2))
\ee
of the balls
of radius $\varepsilon_2$.
Observe that by (\ref{eq:in-1})
and the definition of the 
projection map, $p_1$, 
we have indexed the points
\be\label{eq:in-1x}
x^j_{(a_1,a_2)} \in  B_{d_j}(x^j_{p_1(a_1,a_2)},2\varepsilon_1)\subset X_j,
\ee
which gives
(\ref{eq:in}) for $i=1$.
This completes the base step of our
proof by induction.

For further understanding, note that by combining 
(\ref{eq:BC1}) and 
(\ref{eq:BC2}) we have
\[
X_j \subset
\bigcup_{a_1=1}^{N_1} B_{d_j}(x^j_{(a_1)},\varepsilon_1)
\subset
\bigcup_{a_1=1}^{N_1}
\bigcup_{a_2=1}^{N_2} B_{d_j}(x^j_{(a_1,a_2)},\varepsilon_2)
=
\bigcup_{a\in A_2} B_{d_j}(x^j_a,\varepsilon_2) 
\]
which gives us (\ref{eq:cover}) for
$i=2$.

Now we proceed with the induction step.
Assume we already have a collection
of points indexed by the
sets 
\begin{eqnarray*}
A_1&=&\{(a_1):
1\leq a_1\leq N_1\},\\
A_2&=&\{(a_1,a_2):  
1\leq a_1\leq N_1,\, 
1\leq a_2 \leq N_2\},\\
A_3&=&\{(a_1,a_2,a_3): 
1 \leq a_1 \leq N_1, \,
1 \leq a_2 \leq N_2,\, 
1 \leq a_3 \leq N_3\},\\
&\vdots&\\
A_i&=& \{(a_1, a_2, \ldots, a_i): \,
1 \leq a_1 \leq N_1, \,
1 \leq a_2 \leq N_2,\ldots,
1 \leq a_i \leq N_i \},
\end{eqnarray*}
with projection maps, 
\be
p_1\colon A_2\to A_1, \quad
p_2\colon A_3\to A_2, \quad \ldots, \quad p_{i-1}\colon A_i\to A_{i-1},
\ee
and maps
\be
I^j_1 \colon A_1 \to X_j, \quad
I^j_2 \colon A_2 \to X_j, \quad \ldots,\quad
I^j_i \colon A_i \to X_j,
\ee
identifying the center points, 
\begin{eqnarray*}
x^j_{(a_1,a_2)} \,\,=
&I^j_2((a_1,a_2))&\in\,\,  
B_{d_j}(x^j_{(a_1)},2\varepsilon_1),\\
x^j_{(a_1,a_2,a_3)}\,\,
=
&I^j_3((a_1,a_2,a_3))& \in\,\,  
B_{d_j}(x^j_{(a_1,a_2)},2\varepsilon_2),\\
&\vdots&\\
x^j_{(a_1,a_2,...,a_i)}\,\,
=&I^j_i((a_1,a_2,...,a_i)) &
\in\,\,  
B_{d_j}(x^j_{(a_1,a_2,...,a_{i-1})},2\varepsilon_{i-1}),
\end{eqnarray*}
such that
\begin{eqnarray*}
B_{d_j}(x^j_{(a_1)},\varepsilon_1)
&\subset &
\bigcup_{a_2=1}^{N_2}
B_{d_j}(x^j_{(a_1,a_2)},\varepsilon_2),
\\
B_{d_j}(x^j_{(a_1,a_2)},\varepsilon_2)
&\subset &
\bigcup_{a_3=1}^{N_3}
B_{d_j}(x^j_{(a_1,a_2,a_3)},\varepsilon_3),\\
&\vdots&\\
B_{d_j}(x^j_{(a_1,a_2,...,a_{i-1})},\varepsilon_{i-1})
&\subset &
\bigcup_{a_i=1}^{N_i}
B_{d_j}(x^j_{(a_1,a_2,...,a_{i-1},a_i)},\varepsilon_i).
\end{eqnarray*}
By equicontinuity, each ball of radius $\varepsilon_i$
centered at 
\be
x^j_{(a_1,a_2,...,a_i)}=I^j_i((a_1,a_2,...,a_i)),
\ee
can be covered by $N_{i+1}$ balls of radius $\varepsilon_{i+1}$ centered at points
\be\label{eq:in+1}
x^j_{(a_1,a_2,...,a_i,a_{i+1})}\,\,
\in\,\,  
B_{d_j}(x^j_{(a_1,a_2,...,a_{i-1},a_i)},2\varepsilon_{i}),
\ee
where $a_{i+1}\in \{1,2,...,N_{i+1}\}$.
That is
\be\label{eq:cover+1}
B_{d_j}(x^j_{(a_1,a_2,...,a_{i})},\varepsilon_{i})
\,\,\subset\,\, 
\bigcup_{a_{i+1}=1}^{N_{i+1}}
B_{d_j}(x^j_{(a_1,a_2,...,a_i,a_{i+1})},\varepsilon_{i+1}).
\ee
So we have all these new points together indexed by
the new index set,
\be
A_{i+1}= \{(a_1, a_2, \ldots, a_{i+1}):
1 \leq a_1 \leq N_1, \,
1 \leq a_2 \leq N_2,\ldots,
1 \leq a_{i+1} \leq N_{i+1}\},
\ee
and new
maps $I^j_{i+1} \colon  A_{i+1} \to X_j$ defined by
\be
x^j_{(a_1,a_2,...,a_i,a_{i+1})}=
I^j_{i+1}((a_1,a_2,...,a_i,a_{i+1})).
\ee
The new projection map, 
\be
p_{i}\colon A_{i+1}\to A_{i},
\ee
with (\ref{eq:in+1}) gives (\ref{eq:in}) 
and with (\ref{eq:cover+1}) gives (\ref{eq:cover})
inductively.
\end{proof}

\subsection{\bf The Index Set of Addresses}
\label{sect:p2}

A natural consequence of Proposition~\ref{prop:selection}, is that we
have a uniform way of providing an ``addresses'' to all points in equicompact collections of metric spaces. More precisely, we have the following definition and proposition.

\begin{defn}
Given a set $A$ and maps $p_i$ as in Proposition~\ref{prop:selection},
an {\bf address} is a sequence
\be\label{eq:alpha}
\alpha=\{\alpha_1,\alpha_2,\alpha_3,...\} \subset A
\ee
such that 
\be\label{eq:alpha-a}
\alpha_i=(a_1,a_2,...a_i)\in A_i\subset A \qquad \forall i \in {\mathbb N},
\ee
satisfying
\be
p_i(\alpha_{i+1})=\alpha_i \qquad \forall i \in {\mathbb N}.
\ee
We denote by $\mathcal{A}$ the set of all such addresses (see Figure~\ref{fig:address}).  
\end{defn}

\begin{prop}\label{prop:addresses}
Let $q_i\colon {\mathcal A}\to A_i$ given by
$q_i(\alpha)=\alpha_i$. Then for any $X_j$ and maps $I^j_i\colon A_i\to X_j$ as in 
Proposition~\ref{prop:selection},
the map ${\mathcal I}^j\colon{\mathcal A}\to X_j$,
defined by
\be \label{eq:address-lim}
{\mathcal I}^j(\alpha)=\lim_{i\to \infty}I^j_i(q_i(\alpha))
\ee
is well defined and surjective.  That is, every
point $x\in X_j$, has an address, 
\be
\alpha=\alpha^j(x)=\{\alpha^j_1(x),\alpha^j_2(x),...\}
\in {\mathcal A},
\ee 
such that $x={\mathcal I}^j(\alpha^j(x))$.
In fact,
\be\label{eq:alpha-x}
x\in \bar{B}_{d_j}(I^j_i(\alpha^j_i(x)),\varepsilon_i)
\qquad \forall i\in {\mathbb N}.
\ee
We do not claim this address is unique. Moreover, we get the same result even if \eqref{eq:cover} and \eqref{eq:in-0} hold for closed balls rather than open balls.
\end{prop}

\begin{figure}[h] 
   \centering
   \includegraphics[width=5in]{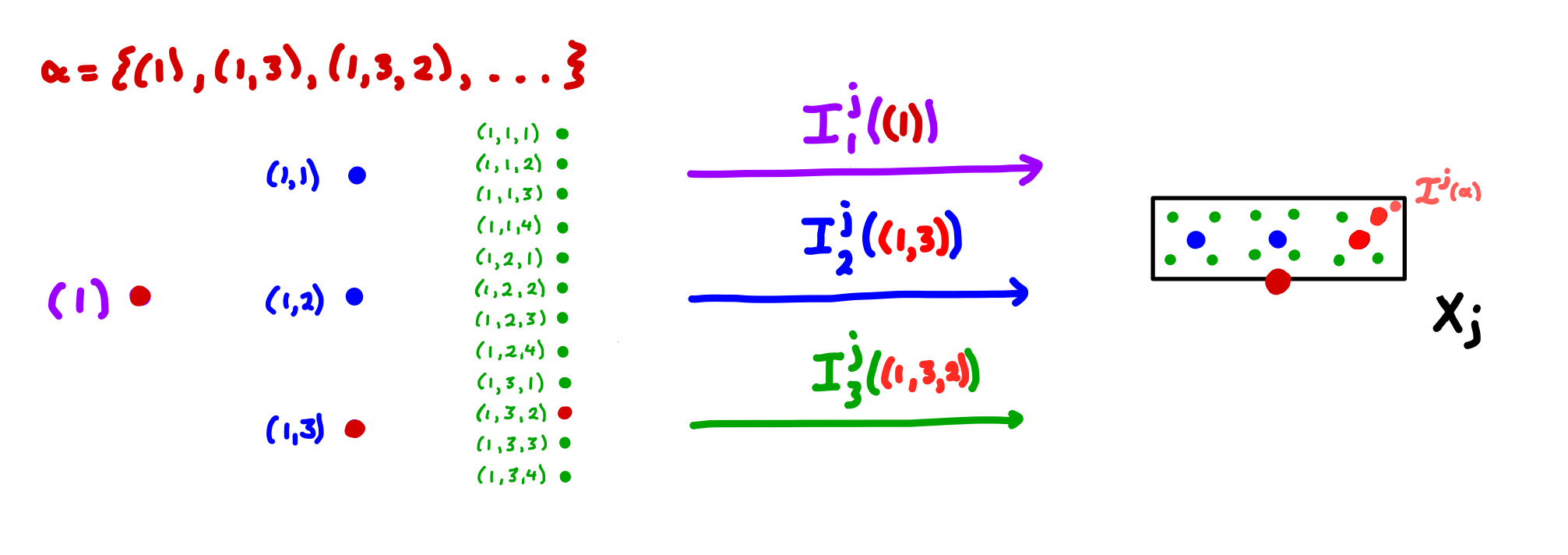}
   \caption{An address $\alpha$ identifying a sequence of points in $X_j$ approaching the point ${\mathcal I}^j(\alpha)$.
   }\label{fig:address}
\end{figure}

\begin{proof}
To prove ${\mathcal I}^j\colon{\mathcal A}\to X_j$ is well defined, we need only show the sequence of points,
\be
\{I^j_i(\alpha_i)\colon\, i\in \mathbb N\}
\ee
is Cauchy.   By $p_i(\alpha_{i+1})=\alpha_i$,
and the property \eqref{eq:in-0}
for $I^j_i$ proven in 
Proposition~\ref{prop:selection}, we have
\be
I^j_{i+1}(\alpha_{i+1})
\in 
\bar{B}_{d_j}(I^j_i(p_i(\alpha_{i+1})),2\varepsilon_i)=\bar{B}_{d_j}(I^j_i(\alpha_i),2\varepsilon_i).
\ee
Thus for all $i\in {\mathbb N}$, we have
\be
d_j(I^j_i(\alpha_i), I^j_{i+1}(\alpha_{i+1}))\le 2\varepsilon_i=1/2^{i-1}. 
\ee
Applying the triangle inequality and then this
estimate repeatedly we have
\begin{eqnarray*}
d_j(I^j_i(\alpha_i), I^j_{i+k}(\alpha_{i+k}))
&\le&
d_j(I^j_i(\alpha_i), I^j_{i+1}(\alpha_{i+1}))+\cdots
+d_j(I^j_{i+k-1}(\alpha_{i+k-1}), I^j_{i+k}(\alpha_{i+k}))\\
&\le&\tfrac{1}{2^{i-1}}+\cdots + \tfrac{1}{2^{i+k-2}}<\tfrac{1}{2^i}=\varepsilon_i.
\end{eqnarray*}
So the sequence is Cauchy.   Since $X_j$ is compact,
it is complete and so the limit in (\ref{eq:address-lim}) exists, defining ${\mathcal I}^j(\alpha)$.

Next we show surjectivity. Take any $x\in X_j$.
We then apply property \eqref{eq:cover}. Since
$X_j$ is covered by balls of radius $\varepsilon_1$ we know there exists $(a^j_1(x))\in A_1$ such that
\be
x\in \bar{B}_{d_j}(I^j_1(a^j_1(x)),\varepsilon_1).
\ee
This ball is covered by balls of radius $\varepsilon_2$,
with centers indexed by $(a_1,a_2)\in A_2$ so
there exists $a^j_2(x)\in \{1, \ldots, N_2\}$ such that
\be
x\in \bar{B}_{d_j}(I^j_2((a^j_1(x),a^j_2(x))),\varepsilon_2).
\ee
Continuing forward we select each $a^j_i(x)$ such that
\be
x\in \bar{B}_{d_j}(I^j_i((a^j_1(x),a^j_2(x),...,a^j_i(x))),\varepsilon_i).
\ee
Taking 
\be
\alpha^j(x)=\{\alpha^j_1(x),\alpha^j_2(x),...\}
\in \mathcal{A}
\ee
where
\be
\alpha^j_i(x)=(a^j_1(x),a^j_2(x),...,a^j_i(x))
\ee
gives us (\ref{eq:alpha-x}).  By the
definition of ${\mathcal I}^j$, and then
by (\ref{eq:alpha-x}), we have
\be
{\mathcal I}^j(\alpha^j(x))
=\lim_{i\to \infty} I^j_i(\alpha^j_i(x))=x.
\ee
\end{proof}

\subsection{\bf Compactness Theorem with Addresses}
\label{sect:p3}

We can now prove our compactness theorem.

\begin{proof}[Proof of Theorem~\ref{thm:timed-Gromov-compactness-A}]
The proof begins as in Gromov's Compactness Criterion in \cite{Gromov-poly} using the
equicompactness as we have stated in Proposition~\ref{prop:selection}
to define a
countable index set $A$
defined as the disjoint union of finite sets,
\be
A=\bigsqcup_{i=1}^{\infty} A_i,
\ee
and a countable dense collection of points
\be
I^j(A)=\{x^j_{a}=I^j(a):\,a\in A\}\subset X_j
\ee
such that each $I^j(A_i)$ is a finite
$\varepsilon_i=1/2^i$ net for $X_j$
and has all the other properties described in Proposition~\ref{prop:selection}.

Then we follow the proof in \cite{BBI} of Gromov's Compactness Theorem, to construct a limit space. 
By the equiboundedness of the distance, for any $a,b \in A$, we define 
\be\label{eq:djab}
d^j_{a,b}=d_j(x^j_{a},x^j_{b})\in [0,D].
\ee
This definition is
symmetric:
\be
d^j_{a,b}=d^j_{b,a}\qquad \forall a,b\in A;
\ee
and it satisfies the triangle inequality:
\be
d^j_{a,c}\le d^j_{a,b}+d^j_{b,c}
\qquad \forall a,b,c\in A.
\ee
By the equiboundedness of the time functions,
for any $a\in A$, we define 
\be\label{eq:tja}
\tau^j_{a}=\tau_j(x^j_{a})\in [0,\tau_{max}].
\ee
Since the time functions are $1$-Lipschitz, we have 
\be\label{eq:tjLip}
|\tau^j_{a}-\tau^j_{b}|\le d^j_{a,b}
\qquad \forall a,b\in A.
\ee

By the countability of the index set $A$, we take subsequences of subsequences and diagonalize so that for the final subsequence (which we still index with $j$), we have for each
$a\in A$, convergence of the real numbers:
\be \label{eq:limtja}
\lim_{j\to\infty}\tau^j_{a}=\tau^\infty_{a}\in [0,\tau_{max}].
\ee
We take further subsequences and diagonalize further, so that for the final subsequence (which we still index with $j$), we have for each
$a,b\in A$,
\be \label{eq:limdjab}
\lim_{j\to\infty}d^j_{a,b}=d^\infty_{a,b}\in [0,D].
\ee
Taking the limit of (\ref{eq:tjLip})
we know
\be\label{eq:tlimLip}
|\tau^\infty_{a}-\tau^\infty_{b}|\le d^\infty_{a,b}
\qquad \forall a,b\in A.
\ee

We now define a pseudo-metric space $(A, d_\infty)$ with
\be
d_{\infty}(a,b)=d^\infty_{a,b}\in [0,D]
\ee
which satisfies the triangle inequality and is symmetric but
is not necessarily definite.
By (\ref{eq:tlimLip}) we know that
\be \label{eq:tdefwell}
d^\infty(a,b)=0\,\,\implies\,\,
\tau^\infty_{a}=\tau^\infty_{b}.
\ee
Thus we can mod out by points of zero
distance apart to define a timed
metric space which is definite:
\be
(X_\infty,d_\infty,\tau_\infty)=(\{[a]:a\in A \}, d_\infty,\tau_\infty)
\ee
where the points are equivalence classes,
\be
[a]=\{b\in A:\, d_\infty(a,b)=0\},
\ee
the distance function is
\be \label{eq:dinftyab}
d_\infty([a],[b])=d^\infty_{a,b} \in [0,D],
\ee
and the time function is
\be\label{eq:tinftya}
\tau_\infty([a])=\tau^\infty_{a} \in [0,\tau_{\max}],
\ee
which is well defined by
(\ref{eq:tdefwell}) and is
$1$-Lipschitz by (\ref{eq:tlimLip}).

We next take the metric completion
\be
(\bar{X}_\infty,d_\infty,\tau_\infty)
\ee
which has the usual definite
distance, $d_\infty$, extended 
from $X_\infty$ to $\bar{X}_\infty$
satisfying symmetry and
the triangle inequality, with
\be
\diam_{d_\infty}(\bar{X}_\infty)\le D,
\ee
and a bounded time function, $\tau_\infty$, extended as a $1$-Lipschitz function 
from $X_\infty$ to $\bar{X}_\infty$.

We claim that $\bar X_\infty$ is compact. Note that to prove this claim is enough to show that for each $i \in \mathbb N$,
\be\label{eq-eps_iNet}
\Bigl \{ [a] \, : \, a \in A_i \Bigr \} \qquad \text{is an $\varepsilon_i$-net of $X_\infty$.}
\ee 
Take $[b] \in X_\infty$. Thus, there exists $k \in \mathbb N$ such that $b \in A_k$. Since for each $j \in \mathbb N$ we know that $I_i^j(A_i)$ is an $\varepsilon_i$-net of $X_j$, by the pigeon-hole principle, there exists $a \in A_i$ such that 
\be
d_j(I_i^j(a),I_i^k(b)) \leq \varepsilon_i
\ee 
for infinitely many indices $i$. 
Thus, up to a  subsequence, 
\be
d_\infty([b],[a])= \lim_{j \to \infty}d_j(x_a^j, x_b^j) \leq \varepsilon_i.
\ee
Hence, we have \eqref{eq-eps_iNet}. 

Up to here we have followed the proof from \cite{BBI} of Gromov's Compactness Theorem, 
with corresponding additions to address the fact that we have time functions, 
to construct a compact timed-metric space $(\bar X_\infty, d_\infty, \tau_\infty)$. 
Since we will show timed-Hausdorff convergence, 
\be
(X_j,d_j,\tau_j) \tHto (\bar{X}_\infty,d_\infty,\tau_\infty),
\ee
using timed-Fr\'echet maps as in Definition \ref{defn:timed-H}, from now on, we proceed in a different manner to \cite{BBI}. 

Note that we can define timed-Fr\'echet maps,
\be \label{tK1}
\kappa_{\tau_j,{\mathcal N}_j}\colon 
X_j\to [0,\tau_{max}]\times
\prod_{a\in A} [0,D] \subset \ell^\infty
\ee
by
\be\label{tK2}
\kappa_{\tau_j,{\mathcal N}_j}(x)=(\tau_j(x),\kappa_{{\mathcal N}_j}(x))
\ee
where 
\be\label{tK3}
\kappa_{{\mathcal N}_j}(x)\in \prod_{a\in A} [0,D]
\ee
has components 
\be\label{tK4}
(\kappa_{\mathcal N_j}(x))_a=d_j(I^j(a),x).
\ee

To define a timed-Fr\'echet map for 
$(\bar X_\infty, d_\infty, \tau_\infty)$, 
recall that we have a countable index set, $A$,
which is the countable disjoint union of finite index sets, $A_i$.
We define
\be
I^\infty_i \colon A_i \to X_\infty\subset \bar{X}_\infty\quad \textrm{ and }\quad I^\infty\colon A\to X_\infty\subset \bar{X}_\infty,
\ee
by 
\be
I^\infty(a)=I^\infty_i(a)=[a]\in X_\infty\subset \bar{X}_\infty \qquad \forall \, a \in A_i \subset A,
\ee 
and we denote these points as
\be\label{eq:points2}
x^\infty_a=I^\infty(a)=I^\infty_i(a).
\ee
Thus,
\be
{\mathcal{N}}_\infty=I^\infty(A)=\{x_a^\infty:\, a\in A\}
\ee
is a countable
ordered collection of points, that is dense in $\bar X_\infty$, since as a set it equals $X_\infty$.
From this, and recalling \eqref{eq:dinftyab} and \eqref{eq:tinftya},
 we also have a timed-Fr\'echet map $\kappa_{\tau_\infty, \mathcal{N_\infty}}
 $ satisfying
 (\ref{tK1})-(\ref{tK4})
for $j=\infty$.

Before proving \eqref{eq:H-A} we first 
show that the uniform estimate \eqref{eq:alpha-A} holds.
As a preliminary step we note that we have uniform convergence on finite unions of the $A_i$:
Recall that by (\ref{eq:djab}), (\ref{eq:limdjab})
and (\ref{eq:dinftyab}), 
for each
$a,b\in A$, we have pointwise convergence, 
\be\label{eq:limdabcntble}
d_\infty(x^\infty_a,x^\infty_b)=
\lim_{j\to\infty}d^j(x^j_a,x^j_b) \in [0,D].
\ee
By (\ref{eq:tja}), (\ref{eq:limtja})
and (\ref{eq:tinftya})
for each $a\in A$, we have pointwise convergence,
\be\label{eq:limtacntble}
\tau_\infty(x^\infty_a)=\lim_{j\to\infty}\tau_j(x^j_a).
\ee

Now, since each $A_i$ is finite, we do obtain uniform convergence on finite unions of the $A_i$: 
\be\label{eq:exists-Neps}
\forall \epsilon>0, \exists N_{\epsilon}^i\in {\mathbb N},
\ee
such that for all $j\ge N_\epsilon^i$ we have
\be\label{eq:unif-max}
\max\{|\tau_\infty(x^\infty_a)-\tau_j(x^j_a)|,
|d_\infty(x^\infty_a,x^\infty_b)-d^j(x^j_a,x^j_b)|\}<\epsilon
\ee
where the maximum is over
\be
a,b \in A_1\cup ... \cup A_i.
\ee

To get better control than just the pointwise convergence as in 
\eqref{eq:limdabcntble} and \eqref{eq:limtacntble}, 
we will use Proposition~\ref{prop:addresses} to provide an
address, $\alpha$, for every point in each $X_j$
and $\bar X_\infty$.

Note that we have proven that $\bar{X}_\infty$ satisfies 
the conclusions (\ref{eq:Iji})-(\ref{eq:Nj}), 
of Proposition~\ref{prop:selection}. It satisfies \ref{eq:cover}
by \eqref{eq-eps_iNet}.  Moreover, it easily satisfies (\ref{eq:in-0}) as follows.  
Let $a \in A_{i+1}$,  
\be 
I^\infty_{i+1}(a) \in B_{d_\infty}(I^\infty_i(p_i(a)), 2\varepsilon_i)\subset \bar X_\infty
\ee
 We know that for each $j \in \mathbb N$, 
\be
d_j(x^j_a, x^j_{p_i(a)}) \leq 2\varepsilon_i.
\ee
Hence, in the limit, we have 
\be
d_\infty(x^\infty_a, x^\infty_{p_i(a)}) = \lim_{j \to \infty} d_j(x^j_a, x^j_{p_i(a)}) \leq 2\varepsilon_i,
\ee
which is the desired estimate.

Since we have applied
Proposition~\ref{prop:selection} for each $X_j$, $j \in \mathbb N$, and the previous paragraph, 
for each $j \in \mathbb N \disjointunion \{\infty\}$
there exist surjective maps, ${\mathcal I}^j\colon{\mathcal A}\to X_j$, $j \in \mathbb N \cup \{\infty\}$
defined by
\be
{\mathcal I}^j(\alpha)=\lim_{i\to \infty}I^j_i(q_i(\alpha))
\ee
that satisfy the conditions of Proposition \ref{prop:addresses}.

We claim that for any $\varepsilon>0$  there exist $N_\varepsilon \in \mathbb N$ such that for all $j \geq N_\varepsilon$ we have for any address $\alpha$,
\be
|\tau_j({\mathcal I}^j(\alpha))-\tau_\infty({\mathcal I}^\infty(\alpha))| \leq \varepsilon
\ee
and that additionally for any $a\in A$, we also have
\be
|\kappa_{j,a}({\mathcal I}^j(\alpha))-\kappa_{\infty,a}({\mathcal I}^\infty(\alpha))| \leq \varepsilon.
 \ee

Let $\delta= \varepsilon/3$ and $i \in \mathbb N$ such that $\varepsilon_i < \delta$. Then 
there exists $N_{\delta}^i\in {\mathbb N}$, as in \eqref{eq:exists-Neps}, 
such that for all $j\ge N_\delta^i$ we have
(\ref{eq:unif-max}). 

We obtain the first inequality
using the triangle inequality, 
\begin{align}
|\tau_j({\mathcal I}^j(\alpha))-\tau_\infty({\mathcal I}^\infty(\alpha))| 
\,\,
\leq \,\, & |\tau_j({\mathcal I}^j(\alpha))-
\tau_j(I^j_i(\alpha_i))| \\ 
& +|\tau_j(I^j_i(\alpha_i))-\tau_\infty(I^j_i(\alpha_i))| \\
& + |\tau_\infty({\mathcal I}^\infty(\alpha))-
\tau_\infty(I^\infty_i(\alpha_i))| \\
\,\,\le \,\, & \,\,
d_j({\mathcal I}^j(\alpha),I^j_i(\alpha_i))  \\
 & +|\tau_j(I^j_i(\alpha_i))-\tau_\infty(I^j_i(\alpha_i))| \\
&+ 
d_\infty({\mathcal I}^\infty(\alpha),I^\infty_i(\alpha_i)) \\
\le \,\,& \,\,\varepsilon_i + \delta + \varepsilon_i \,\,
\leq \,\,\varepsilon \qquad \forall j \ge N^i_\delta, \quad \quad \quad
\end{align}
where for the second and third term in the right hand side we used that $\tau_j, \tau_\infty$ are $1$-Lipschitz and (\ref{eq:alpha-x}), and for the term in the middle we use  (\ref{eq:unif-max}). 
Similarly, we get the second inequality, 
\begin{align}
|\kappa_{j,a}({\mathcal I}^j(\alpha))-\kappa_{\infty,a}({\mathcal I}^\infty(\alpha))| 
\,\,\leq \,\,&
|\kappa_{j,a}({\mathcal I}^j(\alpha))-
\kappa_{j,a}(I^j_i(\alpha_i))| \\
&+  \,\, |\kappa_{j,a}(I^j_i(\alpha_i))
-\kappa_{\infty,a}(I^\infty_i(\alpha_i))| \\
&+ \,\,|\kappa_{\infty,a}({\mathcal I}^\infty(\alpha))-
\kappa_{\infty,a}(I^\infty_i(\alpha_i))| \\
\,\,\le \,\,&
d_j({\mathcal I}^j(\alpha),I^j_i(\alpha_i)) \\
 & + \,\,|\kappa_{j,a}(I^j_i(\alpha_i))
-\kappa_{\infty,a}(I^\infty_i(\alpha_i))|\\
&+ \,\, d_\infty({\mathcal I}^\infty(\alpha),I^\infty_i(\alpha_i)) \\
\,\,\leq \,\,& \,\,\varepsilon_i + \delta + \varepsilon_i \leq \varepsilon \qquad \forall j \ge N^i_\delta,\quad \quad \quad
\end{align}
where we used that $\kappa_j, \kappa_\infty$ are $1$-Lipschitz, (\ref{eq:alpha-x}) and  (\ref{eq:unif-max}).

In particular,
we have uniform convergence
\be
\tau_j\circ{\mathcal I}^j \to \tau_\infty\circ{\mathcal I}^\infty \textrm{ on }{\mathcal{A}}
\ee
and
\be
\kappa_{j,a}\circ{\mathcal I}^j \to \kappa_{\infty,a}\circ{\mathcal I}^\infty \textrm{ on }{\mathcal{A}}.
\ee
Hence, \eqref{eq:sup-A} and \eqref{eq:alpha-A} hold, where $\varphi_j=(\tau_j, \kappa_j)$ and   
$\varphi_\infty=(\tau_\infty, \kappa_\infty)$.

Now the correspondences 
\be
\mathcal{C}_j = \{(\kappa_{\tau_j,\mathcal{N}_j}({\mathcal I}^j(\alpha)),\kappa_{\tau_\infty,\mathcal{N}_\infty}({\mathcal I}^\infty(\alpha))):\alpha\in \mathcal{A}\}
\ee
between $\kappa_{\tau_j,\mathcal{N}_j}(X_j)$ and $\kappa_{\tau_\infty,\mathcal{N}_\infty}(\bar{X}_\infty)$, for all $j\in \mathbb{N}$, and the uniform convergence \eqref{eq:alpha-A} imply 
that 
\be\label{eq-Hconvlinf}
d^{\ell^\infty}_H(\kappa_{\tau_j,\mathcal{N}_j}(X_j),\kappa_{\tau_\infty,\mathcal{N}_\infty}(\bar{X}_\infty))\leq \|\varphi_j({\mathcal I}^j(\alpha))-
\varphi_\infty({\mathcal I}^\infty(\alpha))\|_\infty \to 0.
\ee

Finally, let us define 
\be
Z = \kappa_{\mathcal{N}_\infty}(\bar{X}_\infty)\cup\bigcup_{j\in \mathbb{N}} \kappa_{\mathcal{N}_j}(X_j) \subset \prod_{a\in A} [0,D] \subset \ell^\infty,
\ee
so $\kappa_{\tau_j,\mathcal{N}_j}\colon X_j\to [0,\tau_{max}]\times Z$ for all $j \in \mathbb{N}\cup\{\infty\}$. 

We claim that $Z$ is compact. 
Let $\{z_j\} \subset Z$. If there exists $J \in \mathbb N$ such that 
an infinite number of the $z_j$'s is contained in $\kappa_{\mathcal{N}_J}(X_J)$ then by compactness of $\kappa_{\mathcal{N}_J}(X_J)$
there is a further subsequence that converges to a point in $\kappa_{\mathcal{N}_J}(X_J) \subset Z$. Otherwise,  assume by passing to a subsequence that $z_j \in \kappa_{\mathcal{N}_j}(X_j)$ for all $j \in \mathbb N$.  By \eqref{eq-Hconvlinf}, for each $j \in \mathbb N$ there exists $w_j \in \kappa_{\mathcal{N}_\infty}(\bar X_\infty)$ such that 
\be
d_{\ell^\infty}(z_j, w_j)\to 0. 
\ee
Since we have shown that $\bar X_\infty$ is compact, there is a subsequence $\{w_{j_k}\}$ that converges to a point $w \in \kappa_{ \mathcal{N}_\infty}(\bar X_\infty)\subset Z$. It follows that $\{z_{j_k}\}$ converges to $w$. This completes the proof of $Z$ being compact, and finishes the proof of the theorem.
\end{proof}

\section{\bf Applications of Addresses}

In this final section we restate Gromov's Compactness Theorem and an Arzel\`a--Ascoli Theorem in terms of addresses and pose an open question about a more general version of Arzel\`a-Ascoli.

\subsection{\bf Gromov's Compactness Theorem with Addresses}\label{sect:ap1}

\begin{thm}
[Gromov Compactness Theorem with Addresses]\label{thm:Gromov-compactness-A}
If $(X_j,d_j)$ is a sequence of compact metric spaces that are equibounded
as in (\ref{eq:equibounded})
and
equicompact
as in 
(\ref{eq:equicompact})
then a subsequence converges in the Gromov--Hausdorff sense to a compact metric space $(X_\infty, d_\infty)$. In fact, there exist distance preserving 
Fr\'echet maps,
\be \label{eq:tK-A1-G}
\kappa_j\colon X_j \to  Z \subset \ell^\infty,
\ee
where $Z$ is compact, such that 
\be \label{eq:H-A-G}
d_H^{Z}(\kappa_j(X_j),\kappa_\infty(X_\infty))\to 0.
\ee
In addition, there is an uncountable set of addresses, $\mathcal{A}$,
and surjective index maps,
\be \label{eq:I-A-G}
{\mathcal I}^j\colon{\mathcal A} \to X_j
\quad \textrm{ and }\quad 
{\mathcal I}^\infty\colon{\mathcal A} \to \bar{X}_\infty
\ee
such that
\be \label{eq:alpha-A-G}
\sup_{\alpha \in {\mathcal A}}
d_{\ell^\infty}
\Big(\kappa_j({\mathcal I}^j(\alpha)),\kappa_\infty({\mathcal I}^\infty(\alpha)) \Big) \to 0
\ee
as $j\to \infty$. In particular,
\be \label{eq:sup-A-d-G}
\sup_{\alpha,\alpha' \in {\mathcal A}}
|d_j({\mathcal I}^j(\alpha),{\mathcal I}^j(\alpha'))-
d_\infty({\mathcal I}^\infty(\alpha),{\mathcal I}^\infty(\alpha'))|\to 0.
\ee
\end{thm}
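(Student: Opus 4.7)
\textbf{Proof proposal for Theorem~\ref{thm:Gromov-compactness-A}.} The plan is to run the argument of Theorem~\ref{thm:timed-Gromov-compactness-A} verbatim, simply dropping every step that concerns the time function $\tau_j$. First I would invoke Proposition~\ref{prop:selection} for each $X_j$ to obtain the uniform index set $A=\bigsqcup_i A_i$ together with maps $I^j_i:A_i\to X_j$ producing centers $x^j_a$ of $\varepsilon_i$-nets satisfying the nesting property (\ref{eq:in}). Then, for each pair $a,b\in A$, the reals $d^j_{a,b}=d_j(x^j_a,x^j_b)\in[0,D]$ are bounded, so a standard diagonal subsequence argument on the countable index set $A\times A$ yields a subsequence along which $d^j_{a,b}\to d^\infty_{a,b}$ for every $(a,b)$. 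The limit is symmetric and satisfies the triangle inequality but may fail to be definite.

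Next I construct $(\bar X_\infty,d_\infty)$ exactly as in the timed proof (but without any $\tau$-considerations): form the quasi-metric $(A,d_\infty)$, quotient by the zero-distance relation to obtain $(X_\infty,d_\infty)$ with equivalence classes $[a]$, and take the metric completion $\bar X_\infty$. Compactness of $\bar X_\infty$ follows from the pigeon-hole argument showing that $\{[a]:a\in A_i\}$ is an $\varepsilon_i$-net, which is the untimed version of (\ref{eq-eps_iNet}). The covering property (\ref{eq:cover}) and the nesting property (\ref{eq:in-0}) for $\bar X_\infty$ pass to the limit, because $d_\infty([a],[p_i(a)])=\lim_j d_j(x^j_a,x^j_{p_i(a)})\le 2\varepsilon_i$ and $\{[a]:a\in A_i\}$ is an $\varepsilon_i$-net. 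Hence Proposition~\ref{prop:selection} applies also to $\bar X_\infty$, and Proposition~\ref{prop:addresses} produces surjective index maps $\mathcal{I}^j:\mathcal{A}\to X_j$ for every $j\in\mathbb{N}\cup\{\infty\}$ satisfying (\ref{eq:alpha-x}).

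Having the maps $\mathcal{I}^j$, I define the Fr\'echet maps $\kappa_j:X_j\to Z\subset\ell^\infty$ by $\kappa_j(x)_a=d_j(x^j_a,x)$, and likewise $\kappa_\infty$ using the dense ordered set $\mathcal{N}_\infty=\{[a]:a\in A\}\subset \bar X_\infty$. Uniform convergence (\ref{eq:alpha-A-G}) is then proved via the same three-term triangle inequality as in the timed argument: for fixed $\epsilon>0$ pick $i$ with $\varepsilon_i<\epsilon/3$, use that $A_1\cup\cdots\cup A_i$ is finite to obtain $N_\epsilon^i$ such that $|d_\infty(x^\infty_a,x^\infty_b)-d_j(x^j_a,x^j_b)|<\epsilon/3$ for $a,b$ in this finite set and $j\ge N_\epsilon^i$, and exploit the Lipschitz-one property of $\kappa_j$ and $\kappa_\infty$ together with (\ref{eq:alpha-x}) to absorb the two terms $d_j(\mathcal{I}^j(\alpha),I^j_i(\alpha_i))$ and $d_\infty(\mathcal{I}^\infty(\alpha),I^\infty_i(\alpha_i))$, each bounded by $\varepsilon_i$. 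The consequence (\ref{eq:sup-A-d-G}) follows immediately since $\kappa_j$ and $\kappa_\infty$ are distance preserving by Fr\'echet's Theorem~\ref{thm:K-dist-pres}.

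Finally, the Hausdorff convergence (\ref{eq:H-A-G}) is obtained from the correspondences $\mathcal{C}_j=\{(\kappa_j(\mathcal{I}^j(\alpha)),\kappa_\infty(\mathcal{I}^\infty(\alpha))):\alpha\in\mathcal{A}\}$ via $\mathcal{I}^j,\mathcal{I}^\infty$ being surjective and the uniform estimate (\ref{eq:alpha-A-G}), and compactness of $Z=\kappa_\infty(\bar X_\infty)\cup\bigcup_j\kappa_j(X_j)$ is shown exactly as in the timed case by a subsequence argument using compactness of $\bar X_\infty$ and (\ref{eq:H-A-G}). Since the entire proof is a direct transcription of the timed proof with $\tau$ erased, I do not expect a genuine obstacle; the only bookkeeping issue is verifying that $\bar X_\infty$ satisfies the hypotheses of Proposition~\ref{prop:selection} so that Proposition~\ref{prop:addresses} yields $\mathcal{I}^\infty$, and this is handled exactly as in the timed argument.
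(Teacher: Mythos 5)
Your proof is correct, but it takes the long route. The paper itself notes, in passing, that the theorem ``follows exactly as in the proof of Theorem~\ref{thm:timed-Gromov-compactness-A} simply by removing all references to the time function'' --- which is precisely your transcription argument, and every step you list (re-running Proposition~\ref{prop:selection}, the diagonal subsequence on $A\times A$, the quotient construction of $\bar X_\infty$, verifying (\ref{eq:cover}) and (\ref{eq:in-0}) for $\bar X_\infty$, invoking Proposition~\ref{prop:addresses}, the three-term triangle estimate, the correspondence argument for Hausdorff convergence, and the compactness of $Z$) transcribes cleanly. However, the paper's actual proof is a one-line specialization: set $\tau_j\equiv 0$. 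The constant zero function is trivially Lipschitz-one with values in $[0,\tau_{\max}]$ for any $\tau_{\max}\ge 0$, so $(X_j,d_j,0)$ satisfies all the hypotheses of Theorem~\ref{thm:timed-Gromov-compactness-A}; applying that theorem and writing $\varphi_j(x)=(0,\kappa_j(x))$ directly yields every conclusion of Theorem~\ref{thm:Gromov-compactness-A}, since the first coordinate of $\varphi_j$ contributes nothing to $\|\cdot\|_\infty$. The corollary route buys brevity and avoids re-verifying bookkeeping; your route buys a self-contained proof of the untimed theorem that does not logically depend on having the timed machinery available. One small point you could make explicit: (\ref{eq:sup-A-d-G}) follows from (\ref{eq:alpha-A-G}) by the reverse triangle inequality in $\ell^\infty$ applied twice, using that $\kappa_j$ and $\kappa_\infty$ are distance preserving, giving
\be
|d_j(\mathcal{I}^j(\alpha),\mathcal{I}^j(\alpha'))-d_\infty(\mathcal{I}^\infty(\alpha),\mathcal{I}^\infty(\alpha'))|
\le \|\kappa_j(\mathcal{I}^j(\alpha))-\kappa_\infty(\mathcal{I}^\infty(\alpha))\|_\infty + \|\kappa_j(\mathcal{I}^j(\alpha'))-\kappa_\infty(\mathcal{I}^\infty(\alpha'))\|_\infty.
\ee
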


This theorem follows exactly as in the proof of our Theorem~\ref{thm:timed-Gromov-compactness-A} simply by removing all references to the time function.   For completeness of exposition, we prove it as a trivial corollary of our theorem.

\begin{proof}
Take $\tau_j$ to be the constant zero valued function.  
Then all the hypotheses of Theorem~\ref{thm:timed-Gromov-compactness-A} are satisfied. Thus the conclusions hold. Finally apply
\be
\varphi_j(x)=
(\tau_j(x),\kappa_j(x))=(0,\kappa_j(x)).
\ee
\end{proof}

\subsection{\bf Arzel\`a--Ascoli Theorem with Addresses}
\label{sect:ap2}

\begin{thm}
[Arzel\`a--Ascoli-with-Addresses]\label{thm:ArzAsc-A}
Given a sequence of compact metric spaces:
\be
(X_j,d_j) \GHto (X_\infty,d_\infty),
\ee
and uniformly bounded $K$-Lipschitz functions for some $K>0$,
\be
F_j\colon X_j\to [0,F_{max}],
\ee
then a subsequence, also denoted $F_j$, 
converges to a 
bounded $K$-Lipschitz function
\be
F_\infty\colon X_\infty\to  [0,F_{max}]
\ee
in the following sense:
There exist distance preserving 
Fr\'echet maps,
\be
\kappa_j\colon X_j \to  Z \subset \ell^\infty,
\ee
where $Z$ is compact, such that 
\be
d_H^{Z}(\kappa_j(X_j),\kappa_\infty(X_\infty))\to 0,
\ee
\be \label{eq:sup-A-K}
\sup_{\alpha \in {\mathcal A}}
|F_j({\mathcal I}^j(\alpha))-
F_\infty({\mathcal I}^\infty(\alpha))|\to 0.
\ee
and
\be \label{eq:alpha-A-K}
\sup_{\alpha \in {\mathcal A}}
d_{\ell^\infty}
\Big(\kappa_j({\mathcal I}^j(\alpha)),
\kappa_\infty({\mathcal I}^\infty(\alpha)) \Big)\to 0,
\ee
where 
\be 
{\mathcal I}^j\colon{\mathcal A} \to X_j
\quad\textrm{ and }\quad
{\mathcal I}^\infty\colon{\mathcal A} \to X_\infty
\ee
are surjective maps and 
$\mathcal{A}$ is an uncountable set of addresses.  
\end{thm}

\begin{proof}
Gromov proved that with a compact GH limit, we have an
equibounded and equicompact sequence
of metric spaces \cite{Gromov-1981}.  
Setting
\be\label{eqFjtauj}
\tau_j= \tfrac{1}{K}F_j
\ee
we have timed-metric-spaces, $(X_j, d_j, \tau_j)$ satisfying the
hypotheses of Theorem~\ref{thm:timed-Gromov-compactness-A}.
The conclusions immediately follow from that theorem
and re-applying (\ref{eqFjtauj}).
\end{proof}

\subsection{\bf Arzel\`a--Ascoli Conjecture with Addresses}
\label{sect:ap3}

It would be interesting to apply the address method to more precisely describe the following stronger Arzel\`a--Ascoli Theorem:

\begin{conj}
Given a pair of Gromov--Hausdorff converging sequences of compact metric spaces:
\be
X_j \GHto X_\infty
\quad\textrm{ and }\quad Y_j \GHto Y_\infty
\ee
and given uniformly bounded Lipschitz $K$ functions for some $K>0$,
\be
F_j\colon X_j\to Y_j,
\ee
then a subsequence 
converges to a 
bounded Lipschitz $K$ function
\be
F_\infty\colon X_\infty\to Y_\infty.
\ee
\end{conj}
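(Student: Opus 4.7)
The plan is to apply Theorem~\ref{thm:Gromov-compactness-A} to both sequences $\{X_j\}$ and $\{Y_j\}$ in turn so that, along a single subsequence, I obtain compact $Z_X,Z_Y\subset\ell^\infty$, Fr\'echet maps $\kappa^X_j:X_j\to Z_X$ and $\kappa^Y_j:Y_j\to Z_Y$ for $j\in\mathbb{N}\cup\{\infty\}$, an uncountable address set $\mathcal{A}$ with surjective index maps $\mathcal{I}^j_X:\mathcal{A}\to X_j$ and a countable dense subfamily $A=\bigsqcup_i A_i\subset\mathcal{A}$, uniform convergence of $\kappa^X_j\circ\mathcal{I}^j_X$ to $\kappa^X_\infty\circ\mathcal{I}^\infty_X$, and Hausdorff convergence $d_H^{Z_Y}(\kappa^Y_j(Y_j),\kappa^Y_\infty(Y_\infty))\to 0$. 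The guiding idea is to use the Lipschitz maps $F_j$ to transport the $X$-side address structure across to $Y$, then upgrade pointwise convergence on the dense subfamily $A$ to uniform convergence on all of $\mathcal{A}$ via the $K$-Lipschitz bound.

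First I would construct $F_\infty$ on the dense subset $\{x^\infty_a:a\in A\}\subset X_\infty$ by a diagonal argument. For each $a\in A$ the sequence $\kappa^Y_j(F_j(\mathcal{I}^j_X(a)))$ lies in the compact space $Z_Y$, so a standard diagonal extraction over the countable set $A$ produces a further subsequence along which $\kappa^Y_j(F_j(\mathcal{I}^j_X(a)))$ converges in $Z_Y$ to some $y^\infty_a$ for every $a\in A$. Because $d_H^{Z_Y}(\kappa^Y_j(Y_j),\kappa^Y_\infty(Y_\infty))\to 0$ and $\kappa^Y_\infty(Y_\infty)$ is compact hence closed in $Z_Y$, the limits satisfy $y^\infty_a\in\kappa^Y_\infty(Y_\infty)$. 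Since $\kappa^Y_\infty$ is distance preserving and therefore injective, I would set $F_\infty(x^\infty_a):=(\kappa^Y_\infty)^{-1}(y^\infty_a)\in Y_\infty$.

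I would next verify that $F_\infty$ is $K$-Lipschitz on $\{x^\infty_a:a\in A\}$ by a direct limit computation
\[
d_{Y_\infty}(F_\infty(x^\infty_a),F_\infty(x^\infty_b))=\lim_{j\to\infty}d_{Y_j}(F_j(x^j_a),F_j(x^j_b))\le K\lim_{j\to\infty}d_{X_j}(x^j_a,x^j_b)=K\,d_{X_\infty}(x^\infty_a,x^\infty_b),
\]
using the convergence of distances supplied by Theorem~\ref{thm:Gromov-compactness-A}. Since $\{x^\infty_a:a\in A\}$ is dense in the compact (hence complete) space $X_\infty$, this $K$-Lipschitz partial map extends uniquely to a $K$-Lipschitz $F_\infty:X_\infty\to Y_\infty$.

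Finally, to upgrade pointwise-along-$A$ convergence to uniform-along-$\mathcal{A}$ convergence (in the address sense of Theorem~\ref{thm:ArzAsc-A}), I would combine \eqref{eq:alpha-x} with the $K$-Lipschitz property of $F_j$ and $F_\infty$, estimating for any $\alpha\in\mathcal{A}$ and any $i\in\mathbb{N}$
\[
\bigl\|\kappa^Y_j(F_j(\mathcal{I}^j_X(\alpha)))-\kappa^Y_\infty(F_\infty(\mathcal{I}^\infty_X(\alpha)))\bigr\|_\infty\le 2K\varepsilon_i+\max_{a\in A_1\cup\cdots\cup A_i}\bigl\|\kappa^Y_j(F_j(x^j_a))-\kappa^Y_\infty(F_\infty(x^\infty_a))\bigr\|_\infty,
\]
where the maximum vanishes as $j\to\infty$ by the pointwise construction together with finiteness of $A_1\cup\cdots\cup A_i$; choosing $i$ large then $j$ large gives uniform convergence. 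The main obstacle I expect is the step requiring the limit values $y^\infty_a$ to land in $\kappa^Y_\infty(Y_\infty)$ rather than merely in $Z_Y$: this is where having applied Theorem~\ref{thm:Gromov-compactness-A} to $\{Y_j\}$ (rather than, say, some Lipschitz extension into $\ell^\infty$) and passing to a common subsequence is essential, and it is precisely what lets $F_\infty$ be defined as a map into $Y_\infty$ instead of into the ambient Banach space.
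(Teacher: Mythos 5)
The paper does not prove this statement: it is stated as a conjecture precisely because the authors leave open the task of giving an address-based proof, remarking only that a proof by other means exists in the literature. Your proposal supplies exactly the missing address-based argument the authors request, and it is essentially correct. You invoke Theorem~\ref{thm:Gromov-compactness-A} on both sequences along a common subsequence, use a diagonal argument over the countable set $A$ together with compactness of $Z_Y$ to define $y^\infty_a$, use the Hausdorff convergence $d_H^{Z_Y}(\kappa^Y_j(Y_j),\kappa^Y_\infty(Y_\infty))\to 0$ and the closedness of $\kappa^Y_\infty(Y_\infty)$ to land the limits inside the image, pull back via the distance-preserving (hence injective) $\kappa^Y_\infty$, verify the $K$-Lipschitz bound on the dense net via the limiting distance formula, extend by completeness, and then upgrade to uniform convergence over all of $\mathcal{A}$ by the three-term triangle estimate using \eqref{eq:alpha-x} on both ends and pointwise convergence on the finite set $A_1\cup\cdots\cup A_i$ in the middle. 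This mirrors the structure of the proof of Theorem~\ref{thm:timed-Gromov-compactness-A} given in the paper and would indeed deliver the ``kind of uniform convergence'' the authors say they want.

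Two small points you should tidy up. First, the assertion ``$A=\bigsqcup_i A_i\subset\mathcal{A}$'' is not literally true: elements of $A_i$ are finite tuples $(a_1,\ldots,a_i)$, whereas elements of $\mathcal{A}$ are infinite compatible sequences; you never actually use the inclusion (you always work with the points $x^j_a=I^j_i(a)$), so simply drop the claimed inclusion and refer instead to the countable dense nets $\{x^j_a:a\in A\}$. Second, you should say explicitly that $F_\infty$ is well defined on the dense net: distinct $a,b\in A$ may satisfy $x^\infty_a=x^\infty_b$, and one needs $y^\infty_a=y^\infty_b$ in that case; this follows at once from your Lipschitz limit computation since $d_{X_\infty}(x^\infty_a,x^\infty_b)=0$ forces $d_{Y_\infty}(F_\infty(x^\infty_a),F_\infty(x^\infty_b))\le 0$, but the paper's proof of Theorem~\ref{thm:timed-Gromov-compactness-A} addresses the analogous well-definedness of $\tau_\infty$ via \eqref{eq:tdefwell}, and you should make the parallel step visible here.
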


A proof of this theorem can be found in various places including \cite{Sormani-ArzAsc}.   However a new proof using the addresses would be able to describe the convergence of the functions more precisely as a kind of uniform convergence.

\section*{{\bf Appendix A: Triangle Inequality for the Intrinsic Timed-Hausdorff Distance} }\label{sec-appendix}

For the sake of completeness, we prove the following proposition, which, to the best of our knowledge, has not previously appeared in the literature.

\begin{prop}\label{prop:triangle inequality for d_tau-H}
Let $(X_i, d_i, \tau_i)$, $i=1,2,3$,  be three compact timed-metric-spaces. Then, it holds 
\begin{align}\label{eq:tK-H123}
& d^\kappa_{\tau-H}
\Big((X_1,d_1,\tau_1),(X_3,d_3,\tau_3)\Big)  \leq   \\
& d^\kappa_{\tau-H}
\Big((X_1,d_1,\tau_1),(X_2,d_2,\tau_2)\Big) +
d^\kappa_{\tau-H}
\Big((X_2,d_2,\tau_2),(X_3,d_3,\tau_3)\Big).
\end{align}
\end{prop}

The proof of the previous proposition proceeds along the same lines as that of the following result.

\begin{prop}\label{pop:triangle inequality for d_kappa-GH}
Let $X_1,X_2,X_3$ be compact metric spaces. Then 
\begin{equation}
d_{\kappa-GH}(X_1,X_3) \leq d_{\kappa-GH}(X_1,X_2) + d_{\kappa-GH}(X_2,X_3).
\end{equation}
\end{prop}

The following lemma, which simplifies the optimization procedure in the definition of $d_{\kappa-GH}$, will be used in the proof of 
Proposition .\ref{pop:triangle inequality for d_kappa-GH}.

\begin{lem}\label{lem:reduction in d_kappa-GH}
Let $X, X'$ be compact metric spaces and fix a countable dense set $\mathcal{N}\subset X$. Then 
\begin{equation}
d_{\kappa-GH}(X,X') = \inf d^{\ell^\infty}_{H}(\kappa_{\mathcal{N}}(X),\kappa_{\mathcal{N}'}(X')),
\end{equation}
where the infimum is over all countable and dense sets $\mathcal{N}'\subset X'$.
\end{lem}

\begin{proof}
It is clear that
\begin{equation}
d^{\mathcal{N}}_{\kappa-GH}(X,X') \geq d_{\kappa-GH}(X,X').
\end{equation}
On the other hand, given $\varepsilon>0$ there exist countable dense subsets 
\[
\mathcal{N}_1=\{q_i\}_{i\in\mathbb{N}}\subset X,  \qquad \mathcal{N}'_1=\{r_i\}_{i\in\mathbb{N}}\subset X'
\]
such that
\begin{equation}\label{eq-suitableN1}
    d^{\ell^\infty}_{H}(\kappa_{\mathcal{N}_1}(X),\kappa_{\mathcal{N}'_1}(X')) < d_{\kappa-GH}(X,X') + \varepsilon.
\end{equation}
Since $\mathcal N_1$ is countable and dense, one can 
define $\varphi\colon \mathbb{N}\to\mathbb{N}$ such that
\begin{equation}\label{eq:choice of points in dense set}
d_{X}(p_i,q_{\varphi(i)}) < \varepsilon,
\end{equation}
where $\mathcal{N} = \{p_i\}_{i\in\mathbb{N}}$.
Setting
\begin{align}
\mathcal{M} = \{q_{\varphi(i)}\}_{i\in\mathbb{N}} \subset \mathcal{N}_1 \label{eq:subset of dense set 1}\\
\mathcal{M}' = \{r_{\varphi(i)}\}_{i\in\mathbb{N}}\subset \mathcal{N}_1' \label{eq:subset of dense set 2},
\end{align}
we can define maps $\phi_{\mathcal{M}}\colon X\to \ell^\infty$ and $\phi_{\mathcal{M}'}\colon X'\to \ell^\infty$ given by 
\be
\phi_{\mathcal{M}}(x)=
(d_X(q_{\varphi(1)},x),d_X(q_{\varphi(2)},x),\ldots)
\ee
and 
\be
\phi_{\mathcal{M'}}(x')=
(d_{X'}(r_{\varphi(1)},x'),d_{X'}(r_{\varphi(2)},x'),\ldots).
\ee
The fact that $\mathcal{M}$ and $\mathcal{M}'$ are not necessarily dense implies that they are not necessarily isometric embeddings but this is irrelevant in the rest of the argument. 
By \eqref{eq:choice of points in dense set}, it follows that
\be
d^{\ell^\infty}_{H}(\kappa_{\mathcal{N}}(X),\phi_{\mathcal{M}}(X))  \leq \varepsilon
\ee
and by the definition of 
$d^{\ell^\infty}_{H}(\kappa_{\mathcal{N}_1}(X),\kappa_{\mathcal{N}'_1}(X'))$, \eqref{eq:subset of dense set 1} and \eqref{eq:subset of dense set 2},
\be
d^{\ell^\infty}_{H}(\phi_{\mathcal{M}}(X),\phi_{\mathcal{M}'}(X'))  \leq 
d^{\ell^\infty}_{H}(\kappa_{\mathcal{N}_1}(X),\kappa_{\mathcal{N}'_1}(X')).
\ee
Combining the previous inequalities, the triangle inequality for $d^{\ell^\infty}_{H}$ and \eqref{eq-suitableN1}: 
\begin{align}
d^{\ell^\infty}_{H}(\kappa_{\mathcal{N}}(X),\kappa_{\mathcal{N}'}(X')) &\leq d^{\ell^\infty}_{H}(\kappa_{\mathcal{N}}(X),\phi_{\mathcal{M}}(X)) + d^{\ell^\infty}_{H}(\phi_{\mathcal{M}}(X),\phi_{\mathcal{M}'}(X')) \\
&\leq \varepsilon + d^{\ell^\infty}_{H}(\kappa_{\mathcal{N}_1}(X),\kappa_{\mathcal{N}'_1}(X')) \\
&< 2\varepsilon + d_{\kappa-GH}(X,X').
\end{align}
This implies
\begin{equation}
    d^{\mathcal{N}}_{\kappa-GH}(X,X') < 2\varepsilon+d_{\kappa-GH}(X,X')
\end{equation}
and by letting $\varepsilon\to 0$, the claim follows.
\end{proof}

\begin{proof}[Proof of Proposition~\ref{pop:triangle inequality for d_kappa-GH}]
Define $d_{ij}=d_{\kappa-GH}(X_i,X_j)$ for $i,j\in\{1,2,3\}$. Let $\varepsilon>0$ and $\mathcal{N}_2 \subset X_2$ be a countable dense set. By Lemma~\ref{lem:reduction in d_kappa-GH}, there exist $\mathcal{N}_1\subset X_1$ and  $\mathcal{N}_3\subset X_3$ countable and dense sets such that 
\begin{align}
&d^{\ell^\infty}_{H}(\kappa_{\mathcal{N}_1}(X_1), \kappa_{\mathcal{N}_2}(X_2)) < d_{12}+\varepsilon,\\
&d^{\ell^\infty}_{H}(\kappa_{\mathcal{N}_2}(X_2), \kappa_{\mathcal{N}_3}(X_3)) < d_{23}+\varepsilon.
\end{align}
By definition of $d_{\kappa-GH}$ and the triangle inequality for $d^{\ell^\infty}_{H}$:
\begin{align}
d_{13} &\leq d^{\ell^\infty}_{H}(\kappa_{\mathcal{N}_1}(X_1), \kappa_{\mathcal{N}_3}(X_3)) \\
&\leq d^{\ell^\infty}_{H}(\kappa_{\mathcal{N}_1}(X_1), \kappa_{\mathcal{N}_2}(X_2))  + d^{\ell^\infty}_{H}(\kappa_{\mathcal{N}_2}(X_2), \kappa_{\mathcal{N}_3}(X_3))\\ 
&<  d_{12}+d_{23} + 2\varepsilon.
\end{align}
By letting $\varepsilon\to 0$, the result follows.
\end{proof}

\begin{proof}[Proof of Proposition \ref{prop:triangle inequality for d_tau-H}]
Lemma \ref{lem:reduction in d_kappa-GH} and its proof
can be adapted to timed-metric spaces 
by adding as first coordinate the corresponding time function 
to all the maps with target $\ell^\infty$.
 With this at hand, 
  the proof of Proposition \ref{pop:triangle inequality for d_kappa-GH} 
 can be adapted to timed-metric spaces as above. 
 This finishes the proof. 
\end{proof}

\bibliographystyle{plainurl}
\bibliography{CPS-bib}
\end{document}